 \newcommand{\pend}{\hfill \thicklines \framebox(5.5,5.5)[l]{}}
 \newenvironment{proof}{\noindent {\sc  Proof.} \rm}{\pend}
\numberwithin{equation}{section}
 \newtheorem{theorem}{Theorem}
 \newtheorem{lemma}{Lemma}[section]
 \newtheorem{remark}{Remark}[section]
 \newtheorem{corollary}{Corollary}[section]
 \newtheorem{definition}{Definition}[section]
\begin{document}
 \pagenumbering{arabic} \thispagestyle{empty}
\setcounter{page}{1}

\begin{center}
\begin{center}{\Large{\bf Second Order Asymptotic Properties for the Tail Probability of the Number of Customers
in the $M/G/1$ Retrial Queue}}

\

 { Bin Liu
$^{a}$  and Yiqiang Q. Zhao $^b$
 \\ {\small a. School of Mathematics and Physics, Anhui Jianzhu University,
Hefei 230601, P.R. China}\\
{\small b. School of Mathematics and Statistics,
Carleton University, Ottawa, ON, Canada K1S 5B6}}
\end{center}

December 25, 2018
\date{\today}

\end{center}

\begin{abstract}
When an explicit expression for a probability distribution function $F(x)$ can not be found, asymptotic properties of the tail probability function $\bar{F}(x)=1-F(x)$ are very valuable, since they provide approximations for system performance and approaches for computing probability distribution. In this paper, we study tail asymptotic properties for the number of customers in the $M/G/1$ retrial queueing system with heavy-tailed service time. For queueing systems, studies on asymptotic tails are mainly concentrated on the first order asymptotic behaviour. To the best of our knowledge, there is no second order tail asymptotic analysis for retrial queueing models. Second order asymptotic expansions provide the refined asymptotic results on the first order approximation, and are often more difficult to obtain, as expected. The main contribution of this paper is the second order asymptotic analysis for the $M/G/1$ retrial queue, which provides the information about the convergence speed of the first order approximation.
\medskip

\noindent \textbf{Keywords:} $M/G/1$ retrial queue, Number of customers, Tail probabilities,
Second order asymptotics, Regularly varying distribution.
\medskip

\noindent \textbf{Mathematics Subject Classification (2000):}
60K25; 60E20; 60G50.

\end{abstract}

\section{Introduction}

Stationary probability distributions, such as the stationary queue length distribution, for queueing systems are one of the most important types of performance measures, which often provide fundamental information about the system required in performance evaluation, system design, control and optimization. A large number of queueing systems do not possess explicit expressions for
such stationary distributions. When an explicit expression is not available, asymptotic properties of the tail probabilities often lead to approximations for system performance, and also computational approaches.

In the queueing literature, for a distribution function $F(x)$, tail asymptotic analysis has mainly focused on the so-called first order approximation for the tail probability (or distribution) function $\bar{F}(x)=1-F(x)$. Specifically, we want to identify a function $\bar{F}_1(x)$, referred to as the first (dominant) term in the asymptotic expansion, such that $\lim_{x \to \infty} \bar{F}(x)/\bar{F}_1(x)=1$,
denoted by $\bar{F}(x) \sim \bar{F}_1(x)$,
which is equivalent to the first order asymptotic expansion
$\bar{F}(x) = \bar{F}_1(x) + o(\bar{F}_1(x))$ as $x\to\infty$,
where $o(\bar{F}_1(x))$ is a higher order infinitesimal function compared to $\bar{F}_1(x)$.
The second order approximation is a refinement of the first order approximation. Specifically, besides the first order term $F_1(x)$, we want to further identify a second function $\bar{F}_2(x)$ satisfying $\bar{F}(x) - \bar{F}_1(x) \sim \bar{F}_2(x)$, or
     $\bar{F}(x) = \bar{F}_1(x) + \bar{F}_2(x) + o(\bar{F}_2(x))$ as $x\to\infty$.

In this paper, we are interested in the second order asymptotic expansion for the stationary queue length tail probability function for the $M/G/1$ retrial queueing system. The $M/G/1$ retrial queue considered in this paper is a standard single server retrial queueing model, in which the primary customers arrive according to a Poisson process with rate $\lambda$. An arrived (primary) customer, to the system with the server idle, is served immediately by the server, otherwise it joins the orbit of infinite waiting capacity, becoming a repeated customer. Each of the repeated customers in the orbit independently repeatedly tries (visits to the server) for receiving service until it finds the server idle and then the service starts immediately. All inter-visit times are i.i.d. exponential r.v.'s with (retrial) rate $\mu$. All service times (for primary and repeated customers) are assumed to be i.i.d. r.v.'s (also independent of arrivals) having the distribution $F_{\beta}(t)$ with $F_{\beta}(0)=0$ and a finite mean $\beta_1$. Upon completion of its service, the (primary or repeated) customer leaves the system. Let $T_\beta$ be the generic service time having distribution function $F_{\beta}(t)$, and $\beta(s)$ the Laplace-Stieltjes transforms (LST) of $F_{\beta}(t)$.
Let $\rho=\lambda\beta_1$, be the traffic intensity. It is well known that the system is stable if and only if (iff) $\rho<1$ (\cite{Falin-Templeton:1997}, p.20), which is assumed to hold throughout this paper.

Retrial queues are a  type of very important queueing systems, which find many applications in abroad range of areas and have been extensively studied for more than 40 years. Literature reviews on retrial queues can be found in books or recent surveys, for example,
Falin and Templeton(1997)~\cite{Falin-Templeton:1997},
Artajelo and G\'{o}mez-Corral(2008)~\cite{Artalejo-GomezCorral:2008},
Choi and Chang(1999)~\cite{Choi-Chang:1999},
Kim and Kim(2016)~\cite{Kim-Kim:2016}, and Phung-Duc(2017)~\cite{Phung-Duc:2017}.
Tail asymptotic properties for retrial queues have been reported in, for example,
Shang, Liu and Li (2006)~\cite{Shang-Liu-Li:2006}, Kim, Kim and Ko (2007)~\cite{Kim-Kim-Ko:2007}, Kim, Kim and Kim (2010)~\cite{Kim-Kim-Kim:2010a}, Kim, Kim and Kim (2010, 2012)~\cite{Kim-Kim-Kim:2010c, Kim-Kim-Kim:2012}, Liu and Zhao (2010)~\cite{Liu-Zhao:2010}, Kim and Kim (2012)~\cite{Kim-Kim:2012}, Liu, Wang and Zhao (2012)~\cite{Liu-Wang-Zhao:2012}, Yamamuro (2012)~\cite{Yamamuro:2012},
Liu, Wang and Zhao (2014)~\cite{Liu-Wang-Zhao:2014}, Masuyama (2014)~\cite{Masuyama:2014},
and very recently Liu, Min and Zhao (2017)~\cite{Liu-Min-Zhao:2017}.

Most of the reported results on tail asymptotics for queueing systems are the first order approximations and the volume of research papers on second order approximations is much smaller.
Discussions on the second (or higher) order asymptotic expansions for a tail probability function for queueing systems have, insofar, mainly focused on the tail probabilities of the waiting time, and none of them are for retrial queues. For example, multi-term asymptotic expansions are obtained for the waiting time tail distribution for the standard (non-retrial) $M/G/1$ queues, such as:
in Willekens and Teugels (1992)~\cite{Willekens1992}, under the assumption of subexponential service time distributions;
in Abate, Choudhury and Whitt (1994)~\cite{Abate-Choudhury-Whitt:1994}, under the assumption of Pareto mixture of exponential (PME) service times (Section~3);
in Boxma and Cohen (1998)~\cite{Boxma-Cohen:1998}, for a class of special heavy-tailed service time distributions; and
in Abate and Whitt (1999) \cite{Abate-Whitt:1999}, an extension of the work in~\cite{Boxma-Cohen:1998}.

The second order asymptotic is of fundamental interest in applied probability and statistics. It has been proven that second and higher order properties of regular variation functions are very useful for studying the convergence rate of the extreme order statistics in extreme value theory (see de Haan and Resnick~\cite{DeHaan-Resnick:1996}), characterizing asymptotic normality of Hill's estimator (Geluk et al.~\cite{Geluk-DeHaan-Resnick-Starica:1997}), and establishing the second-order asymptotics of risk measures and their risk concentrations (Degen et al.~\cite{Degen-Lambrigger-Segers:2010}; Hua and Joe~\cite{Hua-Joe:2011}). However, only limited research (e.g., \cite{Willekens1992}) can be found for queueing models on this topic.
It is worthwhile to emphasize that the second order approximation to queueing systems is also of fundamental interest, because:
(1) The second order property determines the rate of convergence for the first order approximation. When the speed of convergence is higher, the asymptotic approximation by the first order term has a better chance of providing a good approximation;
(2) Heavy-tailed service times usually represent atypically or extremely long processing times in telecommunications networks, which result in huge waiting times and queue lengths in the system. The second order asymptotic property provides the information about the sub-extremal level of long processing times, which may not be negligible in the tail asymptotic approximation. For instance, consider the tail probability $\bar{F}_{\beta}(t)=0.01t^{-2.01}+100t^{-1.99}$;
(3) The second asymptotic term itself improves the quality of approximations.
Our result is a complement to the queueing literature, particularly on retrial queues.

Our focus in this paper is to derive the  second order approximation, or two-term asymptotic expansion, for the total number $L_\mu$ of customers in the $M/G/1$ retrial queueing system for a type of heavy-tailed service time distributions, which is specified by the following assumption:

\noindent{\bf Assumption A.}
{\it
$\bar{F}_{\beta}(t)=t^{-a}L(t)$ and $L(t)=r_1+r_2t^{-h}L_0(t)$ as $t\to\infty$, where $a>2$, $h>0$, $r_1>0$, $-\infty<r_2<\infty$ and $L_0(t)$ is a slowly varying function at infinity.
}

Examples of probability distributions satisfying Assumption A include:
\begin{description}
\item[(i)] Hall/Weiss Class (\cite{Weiss:1971}, \cite{Hall:1982}):
$\bar{F}_{\beta}(t)=\displaystyle(1/2)t^{-v}\left(1+t^{-w}\right)$ with $t\ge 1$, where $v>2$ and $w>0$.

\item[(ii)] Burr distributions (including Lomax distributions):
\begin{eqnarray}
\bar{F}_{\beta}(t)&=&\displaystyle\left(\frac {b} {b+t^w}\right)^{v},\  \mbox{where } b,v,w>0 \  \mbox{and }vw>2,\nonumber
\end{eqnarray}
thus $\bar{F}_{\beta}(t)=b^v t^{-vw}\left[1-vb t^{-w}+O(t^{-2w})\right]$ as $t\to\infty$.

\item[(iii)] Folded student's $t$-distributions:
\begin{eqnarray}
\bar{F}_{\beta}(t)&=&2\cdot\displaystyle\frac {\Gamma((v+1)/2)}{\sqrt{v\pi}\Gamma(v/2)}\int_t^{\infty}\left(1+\frac {x^2}{v}\right)^{-{(v+1)}/2}dx,\  \mbox{where } v>2,\nonumber
\end{eqnarray}
thus $\bar{F}_{\beta}(t)=2\cdot \frac {\Gamma((v+1)/2)}{\sqrt{v\pi}\Gamma(v/2)}v^{(v+1)/2}\cdot t^{-v}\left[\frac 1 {v} -\frac {v(v+1)}{2(v+2)}t^{-2}+O(t^{-4})\right]$ as $t\to\infty$.
\end{description}
\begin{remark}
A tail distribution $\bar{F}$ is said to be the second order regularly varying (see, e.g., de Haan and Stadtm\"{u}ller (1996)~\cite{DeHaan-Stadtmuller:1996},
Geluk et al. (1997) \cite{Geluk-DeHaan-Resnick-Starica:1997} or Resnick (2007) \cite{Resnick:2008}) with the first order parameter $-\sigma<0$ and the second order parameter $\varrho<0$, written as $\bar{F}\in 2RV(-\sigma,\varrho)$, if there exists an ultimately positive or negative auxiliary function $A(t)$ with $\lim_{t\to\infty}A(t)= 0$, and a constant $c\neq 0$ such that
\[
\lim_{t\to\infty}\frac{\bar{F}(xt)/\bar{F}(t)-x^{-\sigma}}{A(t)}= c x^{-\sigma}\frac {x^{\varrho}-1} {\varrho}, \quad\mbox{ for all } x>0,
\]
One can easily check that the distribution tail $\bar{F_{\beta}}$ in Assumption A belongs to $2RV(-a,-h)$, by setting $A(t)=t^{-h}L_0(t)$ and $c=-h r_2/r_1$.
\end{remark}

\begin{remark}
For characterizing the first order asymptotic properties, say for the queueing length distribution, it is enough to make the first order asymptotic assumption for the service time distribution $F_{\beta}$, e.g., $\bar{F}_{\beta}(t) \sim t^{-a}L(t)$, where $L(t)$ can be any slowly varying function at $\infty$. However, for the second order approximation, it is necessary  to further specify the second term in the asymptotic expansion for the service time distribution. Since the second order regular variation is a standard definition assumed in the literature for second order approximations, readers could also expect that the same assumption can be made in our paper. Our Assumption~A is indeed an effort in this direction. To see the relationship between Assumption~A and the standard second order regular variation (2RV), in the revised version of our paper, we followed Hua and Joe (2011)~\cite{Hua-Joe:2011} to have the following equivalent condition to 2RV:
\begin{equation} \label{A}
    \overline{F}(t)=r_1 t^{-a}f(t)
\end{equation}
with $r_1>0$ and $\lim_{t\to\infty}f(t)=1$ and $|1-f(t)|$ being a regularly varying function of index $-h$  ($h>0$). It is then not difficult to see that our Assumption~A can be also written in the form of (\ref{A}), where $1-f(t)=kt^{-h}L_0(t)$. Therefore, the difference between the standard 2RV (where $|1-f(t)|$ is regularly varying) and Assumption~A (where $1-f(t)$ is regularly varying) is considered very minor.

The assumption $a>2$ implies that the second moment $\beta_2$ of the service time is finite, which can be potentially weakened (see Remark~\ref{rem:4.2} in Section~\ref{sec:4} for more information).
\end{remark}

The main result in this paper is given in the following theorem:
\begin{theorem}
\label{the:main}
For the $M/G/1$ retrial queue, under Assumption~A, we have the following second order asymptotic expansion:
\begin{equation}  \label{the:main-2}
P\{L_{\mu}>j\}=\frac {\lambda^a r_1} {(a-1)(1-\rho)} j^{-a+1}+\Delta_L(j),
\end{equation}
where as $j\to\infty$,
\begin{eqnarray}
    \Delta_L(j) &=& \left \{ \begin{array}{ll}
     \displaystyle \left[c_L+\frac {\lambda r_2} {a(1-\rho)} L_0(j)\right]\lambda^a j^{-a}+o(j^{-a}), & \mbox{for $h=1$}, \\
\displaystyle \frac {\lambda^{a+h} r_2} {(a+h-1)(1-\rho)} j^{-a-h+1}L_0(j) +o(j^{-a-h+1}), & \mbox{for $0<h<1$}, \\
\displaystyle c_L\lambda^a j^{-a}+o(j^{-a}), & \mbox{for $h>1$},
\end{array} \right.
\nonumber\\
c_L&=&\frac {r_1 (a/2-1) } {1-\rho}+\frac { \lambda r_1(\rho+1/a)} {\mu (1-\rho)^2}+\frac {\lambda^2\beta_2 r_1} {(1-\rho)^2 }.\nonumber
\end{eqnarray}
\end{theorem}


The second order asymptotic expansion given in Theorem~\ref{the:main} can be viewed as a refined result of the equivalence theorem for retrial queues
under the assumption of a heavy-tailed service time (e.g., \cite{Shang-Liu-Li:2006}, \cite{Yamamuro:2012} and \cite{Masuyama:2014}).
Recently, another refinement of the equivalence theorem was provided in \cite{Liu-Min-Zhao:2017}, which gives the first order approximation to the difference
$P\{L_\mu > j\} -P\{L_\infty >j\}$. However, the second order approximation to $L_\mu$ is not a consequence of this refinement (see discussions in Section~\ref{sec:concluding} for details).

The rest of the paper is organized as follows: Section~\ref{sec:2} provides preliminaries to facilitate our main analysis; Section~\ref{sec:3} and Section~\ref{sec:4} contain key results for proving the main theorem; Section~\ref{sec:5} completes the proof to the main result (Theorem~\ref{the:main}); and the final section, Section~\ref{sec:concluding}, contains concluding remarks and numerical results.

\section{Preliminary}
\label{sec:2}

In this section, several stochastic decompositions, as sums of independent or i.i.d. r.v.'s, will be introduced. Notations and properties for the involved r.v.'s will be discussed. These concepts will facilitate the process in later sections to our main result.

For the stable $M/G/1$ retrial queueing system defined earlier, let $N_{orb}$ be the number of repeated customers in the orbit and let $I_{ser}=1$ or 0 according to a busy server or an idle server, respectively. Let $R_{\mu}$ be a r.v.
taking nonnegative integer values with the probability generating function (GF) defined by $Ez^{R_{\mu}}\stackrel{\rm def}{=}E(z^{N_{orb}}|I_{ser}=0)$.
It follows from \cite{Falin-Templeton:1997} (pp.14--16) that $P\{I_{ser}=0\}=1-\rho$ and
\begin{equation}\label{Rmu1}
Ez^{R_{\mu}} = \exp\left\{-\frac {\lambda} {\mu}\int_z^1\frac {1-\beta(\lambda-\lambda u)} {\beta(\lambda-\lambda u)-u} du\right\}.
\end{equation}

It is well known that for the $M/G/1$ retrial queue, the total number $L_{\mu}$ of customers in the system can be written as the sum of two independent random variables (see, p.15 in \cite{Falin-Templeton:1997}): the total number $L_{\infty}$ of customers in the corresponding standard $M/G/1$ queueing system (without retrial) and $R_{\mu}$, i.e.,
\begin{equation}\label{L=L+D}
    L_{\mu}\stackrel{\rm d}{=}L_{\infty}+R_{\mu},
\end{equation}
where the symbol
$\stackrel{\rm d}{=}$ means equality in probability distribution. Such a symbol will be used throughout this paper.
The equality (\ref{L=L+D}) can be verified easily because
\begin{eqnarray}\label{p0(z)+zp1(z)}
Ez^{L_{\mu}} &=& \sum_{n=0}^{\infty}z^nP\{I_{ser}=0,N_{orb}=n\}+\sum_{n=0}^{\infty}z^{n+1}P\{I_{ser}=1,N_{orb}=n\}\nonumber\\
&=&p_0(z)+zp_1(z),
\end{eqnarray}
where $p_i(z)\stackrel{\rm def}{=}\sum_{n=0}^{\infty}z^nP\{I_{ser}=i,N_{orb}=n\}$, $i=0,1$, are explicitly expressed (e.g., pp.9--10 in \cite{Falin-Templeton:1997}). The expressions for $p_i(z)$, together with (\ref{p0(z)+zp1(z)}), leads to $Ez^{L_{\mu}}=Ez^{L_{\infty}}\cdot Ez^{R_{\mu}}$, since
\begin{eqnarray}
Ez^{L_{\mu}} &=& \frac {(1-\rho)(1-z)} {\beta(\lambda-\lambda z)-z}\cdot \beta(\lambda-\lambda z)\cdot\exp\left\{-\frac {\lambda} {\mu}\int_z^1\frac {1-\beta(\lambda-\lambda u)} {\beta(\lambda-\lambda u)-u} du\right\},\\
Ez^{L_{\infty}} &=& \frac {(1-\rho)(1-z)} {\beta(\lambda-\lambda z)-z}\cdot \beta(\lambda-\lambda z),\label{without-retrial}
\end{eqnarray}
which results in (\ref{L=L+D}).

The stochastic decomposition (\ref{L=L+D}) is often used to establish the asymptotic equivalence:
\begin{equation}\label{Lmu=Linfty}
P\{L_{\mu}>j\}\sim P\{L_{\infty}>j\}\quad\mbox{ as }j\to\infty,
\end{equation}
under the assumption of a heavy-tailed service time (e.g., \cite{Shang-Liu-Li:2006}, \cite{Yamamuro:2012} and \cite{Masuyama:2014}), which reveals the first order asymptotic behaviour of $P\{L_{\mu}>j\}$.
One should notice that, in the first order approximation to $P\{L_{\mu}>j\}$, $P\{R_{\mu}>j\}$ is dominated by $P\{L_{\infty}>j\}$, so a detailed (first order) asymptotic behaviour of $P\{R_{\mu}>j\}$ is not required, and the first order asymptotic behaviour of $P\{L_{\infty}>j\}$ is sufficient.
However, as will be shown in later sections, both the first order asymptotic behaviour of $P\{R_{\mu}>j\}$ and the second order asymptotic expansion of $P\{L_{\infty}>j\}$ are required for determining the second term in the second order asymptotic expansion of $P\{L_{\mu}>j\}$.

To this end, we first rewrite (\ref{Rmu1}). Let
\begin{eqnarray}
\psi &=&\frac{\rho}{\mu(1-\rho)}, \label{psi} \\
\kappa(s)&=&\frac{1-\rho} {\beta_1}\cdot\frac{1-\beta(s)} {s-\lambda+\lambda\beta(s)},\label{K(u)}\\
\tau(s)&=&\exp\left\{-\psi\int_0^s\kappa(u)du\right\}.\label{tau00}
\end{eqnarray}
It is easy to see, from (\ref{Rmu1}) and (\ref{psi})--(\ref{tau00}), that
\begin{equation}
    Ez^{R_{\mu}} = \exp\left\{-\frac {\lambda} {\mu}\int_0^{\lambda-\lambda z}\frac{1-\beta(s)} {s-\lambda+\lambda\beta(s)}ds\right\}=\tau(\lambda-\lambda z).\label{D^{(0)}}
\end{equation}

We now show that both $\kappa(s)$ and $\tau(s)$ are the LSTs of two probability distributions, respectively. For the first assertion, let $F_{\beta}^{(e)}(x)$ be the so-called equilibrium distribution of $F_{\beta}(x)$,
which is defined as $F_{\beta}^{(e)}(x)= \beta_1^{-1}\int_0^{x}(1-F_{\beta}(t))dt$. The LST $\beta^{(e)}(s)$ of $F_{\beta}^{(e)}(x)$ can be written as
$\beta^{(e)}(s)=(1-\beta(s))/(\beta_1 s)$.
From (\ref{K(u)}), we have
\begin{equation}\label{kappa(s)-2}
\kappa(s) = \frac{(1-\rho)\beta^{(e)}(s)}{1-\rho\beta^{(e)}(s)}=\sum_{k=1}^{\infty}(1-\rho)\rho^{k-1}(\beta^{(e)}(s))^k.
\end{equation}

\begin{remark}\label{kappa-geo-sum}
Define $T_{\kappa}$ to be a geometric sum of i.i.d. r.v.'s $T_{\beta,j}^{(e)}$, $j\ge 1$, each with the distribution $F_{\beta}^{(e)}(x)$; or more specifically,
\begin{equation}\label{K{circ}}
T_{\kappa}\stackrel{\rm d}{=}T_{\beta,1}^{(e)}+T_{\beta,2}^{(e)}+\cdots+T_{\beta,J}^{(e)},
\end{equation}
where $P(J=j)=(1-\rho)\rho^{j-1}$, $j\ge 1$ and $J$ is independent of $T_{\beta,j}^{(e)}$ for $j\ge 1$. Then, immediately from (\ref{kappa(s)-2}), $\kappa(s)$ can be viewed as the LST of the distribution function $F_{\kappa}(\cdot)$ of the r.v. $T_{\kappa}$.
\end{remark}

For the second assertion that $\tau(s)$ is the LST of a probability distribution on $[0,\infty)$, denoted by $F_{\tau}(x)$, we use mathematical induction.  By Theorem~1 in Feller (1991)~\cite{Feller1971} (see p.439),
it is true as long as $\tau(s)$ is completely monotone, i.e., $\tau(s)$ possesses derivatives $\tau^{(n)}(s)$ of all orders such that $(-1)^{n}\tau^{(n)}(s)\ge 0$ for $s> 0$, and $\tau(0)=1$. First, it is clear from (\ref{tau00}) that $\tau(0)=1$ and
\begin{equation}\label{tau-probab-dist}
\tau^{(1)}(s)=-\psi\cdot\tau(s)\kappa(s).
\end{equation}
We then proceed with the mathematical induction on $n$. Obviously, $-\tau^{(1)}(s)\ge 0$ for $s> 0$. Next, let us make the induction hypothesis that $(-1)^{k}\kappa^{(k)}(s)\ge 0$ for $s> 0$ and all $k=1,2,\ldots, n$. Taking derivatives $n$ times on both sides of (\ref{tau-probab-dist}), we get
\begin{equation}
\tau^{(n+1)}(s)=-\psi\cdot\sum_{i=0}^{n}\binom{n}{i}\tau^{(i)}(s) \kappa^{(n-i)}(s).
\end{equation}
Therefore,
\begin{equation}
(-1)^{n+1}\tau^{(n+1)}(s)=\psi\cdot\sum_{i=0}^{n}\binom{n}{i}\left[(-1)^{i}\tau^{(i)}(s)\right]\cdot \left[(-1)^{n-i}\kappa^{(n-i)}(s)\right]\ge 0\quad \mbox{for }s> 0.
\end{equation}
By Remark~\ref{kappa-geo-sum}, $\kappa(s)$ is the LST of probability distribution $F_{\kappa}(\cdot)$, hence $(-1)^{k}\kappa^{(k)}(s)\ge 0$ for $s> 0$ and $k=1,2,\ldots$, which, together with the induction hypothesis, completes the proof for $k=n+1$.

\begin{remark}\label{tau-compound-possion}
Let $T_{\tau}$ be a r.v. having the distribution $F_{\tau}(x)$. Since $\tau(s)$ is the LST of the probability distribution $F_{\tau}(x)$,
the expression $Ez^{R_{\mu}}=\tau(\lambda-\lambda z)$  in (\ref{D^{(0)}}) implies that $R_{\mu}$ can be regarded as the number of Poisson arrivals at rate $\lambda$ within a random time $T_{\tau}$.
\end{remark}

In the following, we consider a stochastic decomposition for $L_\infty$.
Note that (\ref{without-retrial}) can be rewritten as
\begin{equation}\label{Ez-L-infty}
Ez^{L_{\infty}} = \theta(\lambda-\lambda z)\cdot\beta(\lambda-\lambda z),
\end{equation}
where
\begin{equation}\label{theta(s)}
\theta(s)= 1-\rho +\rho\kappa(s).
\end{equation}
This implies that $\theta(s)$ can be viewed as the LST of the probability distribution function $F_{\theta}(t)$ of a r.v. $T_{\theta}$, defined by
\begin{eqnarray}\label{def-T-theta}
T_{\theta}&\stackrel{\rm def}{=}&\left\{\begin{array}{ll}
0, &\mbox{ with probability }1-\rho,\\
T_{\kappa}, &\mbox{ with probability }\rho.
\end{array}
\right.
\end{eqnarray}

\begin{remark}\label{remark-DLL}
By (\ref{Ez-L-infty}), with the same argument as that in Remark~\ref{tau-compound-possion}, one can interpret $L_{\infty}$ as the number of Poisson arrivals at rate $\lambda$ within a random time $T_{\theta}+T_{\beta}$, where $T_{\theta}$ and $T_{\beta}$ are independent, and  $L_{\mu}$ can also be interpreted in a similar fashion.
\end{remark}

More precisely, the interpretations in Remarks~\ref{tau-compound-possion} and \ref{remark-DLL} are restated in the following lemma.

\begin{lemma} \label{lem:interpretations}
Let $N_t$ be a Poisson process with rate $\lambda$, which is independent of the r.v.'s $T_{\kappa}$, $T_{\beta}$ and $T_{\tau}$ (defined in the above discussions). Then, $R_{\mu}$, $L_{\infty}$ and $L_{\mu}$ can be expressed as follows:
\begin{eqnarray}
R_{\mu}&\stackrel{\rm d}{=}&N_{T_{\tau}},\label{DLL-0}\\
L_{\infty}&\stackrel{\rm d}{=}&N_{T_{\theta}+T_{\beta}}\stackrel{\rm d}{=}N_{T_{\theta}}+N_{T_{\beta}},\label{DLL-1} \\
L_{\mu}&\stackrel{\rm d}{=}&N_{T_{\theta}+T_{\beta}+T_{\tau}}\stackrel{\rm d}{=}N_{T_{\theta}}+N_{T_{\beta}}+N_{T_{\tau}}.\label{DLL-2}
\end{eqnarray}
\end{lemma}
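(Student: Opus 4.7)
The plan is to match generating functions using the standard observation that if $T\ge 0$ is independent of a Poisson process $N_t$ of rate $\lambda$ and has LST $\phi_T(s)=Ee^{-sT}$, then conditioning on $T$ gives
\begin{equation}
Ez^{N_T}=E\bigl[e^{-\lambda(1-z)T}\bigr]=\phi_T(\lambda-\lambda z),\qquad |z|\le 1.\nonumber
\end{equation}
Since a $\mathbb{Z}_+$-valued distribution is determined by its probability generating function, each of the three identities reduces to checking that the LST of the randomized time on the right matches the factor in $\lambda-\lambda z$ already isolated on the left.

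First I would dispatch (\ref{DLL-0}): by (\ref{D^{(0)}}), $Ez^{R_\mu}=\tau(\lambda-\lambda z)$, while the display above with $T=T_\tau$ gives $Ez^{N_{T_\tau}}=\tau(\lambda-\lambda z)$, so $R_\mu\stackrel{\rm d}{=}N_{T_\tau}$. For (\ref{DLL-1}), I would use (\ref{Ez-L-infty})--(\ref{theta(s)}) together with the independence of $T_\theta$ and $T_\beta$: the sum $T_\theta+T_\beta$ has LST $\theta(s)\beta(s)$, and the display applied to $T=T_\theta+T_\beta$ yields $Ez^{N_{T_\theta+T_\beta}}=\theta(\lambda-\lambda z)\beta(\lambda-\lambda z)=Ez^{L_\infty}$. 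The second equality in (\ref{DLL-1}) is the usual splitting of a Poisson process across a random boundary: conditionally on $(T_\theta,T_\beta)$ the increment $N_{T_\theta+T_\beta}-N_{T_\theta}$ is Poisson of mean $\lambda T_\beta$ and independent of $N_{T_\theta}$, so it has the same law as an independent $N_{T_\beta}$ and may be written as such.

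For (\ref{DLL-2}) I would combine (\ref{L=L+D}) with the two pieces already established: $L_\mu\stackrel{\rm d}{=}L_\infty+R_\mu$ with independent summands, and the same LST argument applied to the independent sum $T_\theta+T_\beta+T_\tau$ gives $Ez^{N_{T_\theta+T_\beta+T_\tau}}=\theta(\lambda-\lambda z)\beta(\lambda-\lambda z)\tau(\lambda-\lambda z)=Ez^{L_\mu}$. The Poisson increment over the disjoint random block $(T_\theta+T_\beta,\,T_\theta+T_\beta+T_\tau]$ supplies the third, independent summand that plays the role of $N_{T_\tau}$. I do not foresee any real obstacle here; the only care required is the bookkeeping ensuring that the three counts $N_{T_\theta}$, $N_{T_\beta}$, $N_{T_\tau}$ written side-by-side refer to disjoint, and hence independent, blocks of the same process, so that the three-way factorization of the GF yields the stated equalities in distribution and not merely in expectation.
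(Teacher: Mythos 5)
Your argument is correct and matches the paper's: the authors establish \eqref{DLL-0} in Remark~\ref{tau-compound-possion} via exactly the identity $Ez^{N_T}=\phi_T(\lambda-\lambda z)$ applied to \eqref{D^{(0)}}, and in Remark~\ref{remark-DLL} they invoke ``the same argument'' together with \eqref{Ez-L-infty}--\eqref{theta(s)} and independence of $T_\theta$, $T_\beta$, $T_\tau$ to obtain \eqref{DLL-1}--\eqref{DLL-2}. Your treatment of the second equalities via independent Poisson increments over disjoint random blocks is the standard justification the paper leaves implicit.
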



Our asymptotic analysis (in the following sections) is based on the assumption that
the service time $T_{\beta}$ has a so-called regularly varying tail. We adopt the following definitions.

\begin{definition}[e.g., Bingham et al. (1989) \cite{Bingham:1989}]
\label{Definition 3.1}
A measurable function $U:(0,\infty)\to (0,\infty)$ is regularly varying at $\infty$ with index $\sigma\in(-\infty,\infty)$ (written $U\in R_{\sigma}$) iff $\lim_{t\to\infty}U(xt)/U(t)=x^{\sigma}$ for all $x>0$. If $\sigma=0$ we call $U$ slowly varying, i.e., $\lim_{t\to\infty}U(xt)/U(t)=1$ for all $x>0$.
\end{definition}

\begin{definition}[e.g., Foss, Korshunov and Zachary (2011)~\cite{Foss2011}]
\label{Definition 3.2}
A distribution $F$ on $(0,\infty)$ belongs to the class of subexponential distribution (written $F\in \mathcal S$) if $\lim_{t\to\infty}\bar{F}^{*2}(t)/\bar{F}(t)=2$, where $\bar{F}=1-F$ and $\bar{F}^{*2}$ denotes the second convolution of $\bar{F}$.
\end{definition}

It is well known that for a distribution $F$ on $(0,\infty)$, if $\bar{F}(t)\sim t^{-\alpha}L(t)$, where $\alpha\ge 0$ and $L(t)$ is a slowly varying function at $\infty$, then $F\in \mathcal S$
(see, e.g., Embrechts, Kluppelberg and Mikosch (1997)~\cite{Embrechts1997}).

\bigskip
For convenience to the readers, main notations and key quantities are summarized in the following list:

\begin{description}
\item[$\lambda$]: Poisson arrival rate;

\item[$\mu$]: Poisson retrial rate;

\item[$T_\beta$] and {\bf $F_\beta(\cdot)$\ }: Service time r.v. and its distribution, respectively;

\item[$\beta_n$]: $n$th moment of $T_\beta$;

\item[$\beta(s)$]: LST of $F_\beta(\cdot)$;

\item[$\rho=\lambda \beta_1$]: Traffic intensity of the $M/G/1$ retrial queue;

\item[$N_{orb}$]: Number of the repeated customers in the orbit;

\item[$I_{ser}$]: State of the server, taking values 1 or 0 according to busy or idle, respectively;

\item[$R_\mu$]: r.v. having distribution $P\{R_\mu=j\} {=} P\{N_{orb}=j|I_{ser}=0\}$;

\item[$L_\mu$]: Total number of customers in the $M/G/1$ retrial queueing system;

\item[$L_\infty$]: Total number of customers in the corresponding standard (non-retrial) $M/G/1$ queueing system;

\item[$L_\mu \stackrel{\rm d}{=} L_\infty + R_\mu$]: Stochastic decomposition;

\item[$\psi$]: A constant given in (\ref{psi});

\item[$F_{\beta}^{(e)}(x)$]:  Equilibrium distribution of $F_{\beta}(\cdot)$;

\item[$\beta^{(e)}(s)$]: LST of $F_{\beta}^{(e)}(\cdot)$;

\item[$T^{(e)}_{\beta,j}$]: i.i.d. r.v.'s having distribution $F_{\beta}^{(e)}(\cdot)$;

\item[$J$]: Geometric r.v. with parameter $\rho$, independent of $T^{(e)}_{\beta,j}$;

\item[$T_\kappa \stackrel{\rm d}{=} \sum_{j=1}^J T^{(e)}_{\beta,j}$] and {\bf $F_\kappa(\cdot)$}\ : Geometric sum of $T^{(e)}_{\beta,j}$ and its distribution;

\item[$\kappa(s)$]: Defined in (\ref{K(u)}) and proved to be the LST of $F_\kappa(\cdot)$;

\item[$\tau(s)$] and  {\bf $F_\tau(\cdot)$\ }: $\tau(s)$ is defined in (\ref{tau00}) and proved to be the LST of a probability distribution $F_\tau(\cdot)$;

\item[$T_\tau$]: r.v. having distribution $F_\tau(\cdot)$;

\item[$T_\theta$] and {\bf $F_\theta(\cdot)$}\ : r.v. taking values 0 or $T_\kappa$ with probability $1-\rho$ or $\rho$, respectively, and its distribution;

\item[$\theta(s)$]: LST of $F_\theta(\cdot)$;

\item[$L(t)$] and  {\bf $L_0(t)$\ }:  Slowly varying functions at $\infty$;

\item[$N_t$]: Poisson process with rate $\lambda$;

\item[$R_{\mu} \stackrel{\rm d}{=} N_{T_{\tau}},$
$L_{\infty} \stackrel{\rm d}{=} N_{T_{\theta}+T_{\beta}}\stackrel{\rm d}{=}N_{T_{\theta}}+N_{T_{\beta}}$] and
 {\bf $L_{\mu} \stackrel{\rm d}{=} N_{T_{\theta}+T_{\beta}+T_{\tau}}\stackrel{\rm d}{=}N_{T_{\theta}}+N_{T_{\beta}}+N_{T_{\tau}}$\ }: See Lemma~\ref{lem:interpretations}.

\end{description}

\section{Asymptotic analysis for tail probability of $T_{\tau}$}
\label{sec:3}

In this section, we provide an asymptotic property for the tail probability of the r.v. $T_{\tau}$, which will be stated in Theorem~\ref{theorem-T-tail}.
This is the first order asymptotic expansion of $P\{T_{\tau}>t\}$, which holds under a weaker condition (a relaxation of Assumption~A): $P\{T_{\beta}>t\} \sim t^{-a}L(t)$ as $t\to\infty$ where $a>1$.

\begin{lemma}[pp.580--581 in \cite{Embrechts1997}]
\label{Embrechts-compound-geo}
Let $N$ be a r.v. with $P\{N=k\}=(1-\rho)\rho^{k-1}$, $k=1,2,\ldots$, and
$\{Y_k\}_{k=1}^{\infty}$ be a sequence of non-negative i.i.d. r.v.'s having a common subexponential distribution $F$.
Define $S_n=\sum_{k=1}^n Y_k$. Then
\begin{equation}
P\{S_N > t\} \sim  \frac {1} {1-\rho} (1-F(t)),\quad t\to \infty.
\end{equation}
\end{lemma}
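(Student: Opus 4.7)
The plan is to reduce the statement to the classical fact that for a subexponential distribution $F$ one has $\overline{F^{*n}}(t)/\overline{F}(t) \to n$ as $t\to\infty$, combined with Kesten's bound, which together permit interchange of the limit with the series arising from the geometric mixture.

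First I would condition on $N$ to write
\begin{equation}
P\{S_N > t\} = \sum_{n=1}^{\infty} (1-\rho)\rho^{n-1}\,\overline{F^{*n}}(t),\nonumber
\end{equation}
so that after dividing by $\overline{F}(t)$ the ratio $P\{S_N>t\}/\overline{F}(t)$ becomes a convergent series in $\overline{F^{*n}}(t)/\overline{F}(t)$ weighted by the geometric probabilities. The pointwise (in $n$) limiting behavior is then exactly the defining property of subexponentiality, suitably iterated: from $\overline{F^{*2}}(t)/\overline{F}(t)\to 2$ an easy induction shows $\overline{F^{*n}}(t)/\overline{F}(t)\to n$ for each fixed $n$.

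The main obstacle, and the only nontrivial step, is justifying the interchange of limit and infinite sum; this is where subexponentiality alone is not enough, and one needs Kesten's lemma: for every $\varepsilon>0$ there exists a constant $K(\varepsilon)<\infty$ with
\begin{equation}
\overline{F^{*n}}(t)/\overline{F}(t) \le K(\varepsilon)(1+\varepsilon)^n \qquad \text{for all } n\ge 1,\ t\ge 0.\nonumber
\end{equation}
I would choose $\varepsilon$ small enough that $\rho(1+\varepsilon)<1$; then the dominating series $\sum_{n=1}^{\infty}(1-\rho)\rho^{n-1} K(\varepsilon)(1+\varepsilon)^n$ converges, and dominated convergence legitimizes passing the limit $t\to\infty$ inside the sum.

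Finally, taking the pointwise limit term by term yields
\begin{equation}
\lim_{t\to\infty}\frac{P\{S_N>t\}}{\overline{F}(t)} = \sum_{n=1}^{\infty} (1-\rho)\rho^{n-1}\cdot n = E[N] = \frac{1}{1-\rho},\nonumber
\end{equation}
which is exactly the asserted asymptotic equivalence. Since the lemma is quoted from Embrechts, Kl\"uppelberg and Mikosch (1997), no further sharpening is needed; the argument above is the standard one and requires only subexponentiality of $F$ together with $\rho<1$ so that Kesten's bound is summable against the geometric weights.
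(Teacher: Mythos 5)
Your proof is correct, and it is the standard argument for this result. Since the paper simply cites this lemma from Embrechts, Kl\"uppelberg and Mikosch (1997, pp.\,580--581) without reproducing a proof, there is nothing in the paper to compare against directly; your argument is essentially the proof given in that reference. You correctly identify all the ingredients: conditioning on $N$ to get $P\{S_N>t\}=\sum_{n\ge 1}(1-\rho)\rho^{n-1}\overline{F^{*n}}(t)$, the termwise limit $\overline{F^{*n}}(t)/\overline{F}(t)\to n$ (which follows by induction from the definition of $\mathcal S$), Kesten's bound $\overline{F^{*n}}(t)/\overline{F}(t)\le K(\varepsilon)(1+\varepsilon)^n$ as the domination device, the choice of $\varepsilon$ with $\rho(1+\varepsilon)<1$ to make the dominating series summable, and the final evaluation $\sum_{n\ge 1}n(1-\rho)\rho^{n-1}=E[N]=1/(1-\rho)$. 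No gaps.
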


Suppose that $P\{T_{\beta}>t\} \sim t^{-a}L(t)$ as $t\to\infty$ where $a>1$, by Karamata's theorem (e.g., p.28 in \cite{Bingham:1989}), we have $\int_t^{\infty}(1-F_{\beta}(x))dx\sim (a-1)^{-1} t^{-a+1} L(t)$,
which implies $1-F_{\beta}^{(e)}(t) \sim ((a-1)\beta_1)^{-1}  t^{-a+1} L(t)$, $t\to\infty$.
By Remark~\ref{kappa-geo-sum} and applying Lemma~\ref{Embrechts-compound-geo}, we have
\begin{equation}\label{P{K>x}}
P\{T_{\kappa}>t\} \sim c_{\kappa}\cdot t^{-a+1} L(t),\quad t\to\infty,
\end{equation}
where
\begin{equation}\label{c-K}
c_{\kappa} = \frac 1 {(1-\rho)(a-1)\beta_1}.
\end{equation}

Note that $T_{\tau}$ has the distribution function $F_{\tau}$ defined in terms of its LST $\tau(s)$ in (\ref{tau00}),
which is, therefore, determined by the distribution function $F_{\kappa}$ of $T_{\kappa}$.
In the following theorem, we present the asymptotic tail probability of $T_{\tau}$.
\begin{theorem}\label{theorem-T-tail}
Suppose that $P\{T_{\beta}>t\} \sim t^{-a}L(t)$ as $t\to\infty$ where $a>1$. Then
\begin{equation}
P\{T_{\tau}>t\} \sim (1-1/a)c_{\kappa}\psi\cdot t^{-a}L(t),\quad t\to\infty,\label{main}
\end{equation}
where $\psi$ and $c_{\kappa}$ are expressed in (\ref{psi}) and (\ref{c-K}), respectively.
\end{theorem}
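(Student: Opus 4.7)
The strategy I would follow is to recognize that $\tau(s)$ in (\ref{tau00}) is the Laplace exponent of a pure-jump subordinator, identify its L\'evy measure explicitly in terms of $F_\kappa$, and then transfer the tail of that L\'evy measure to the tail of $T_\tau$ via the Embrechts--Goldie--Veraverbeke (EGV) theorem for infinitely divisible distributions with subexponential L\'evy measure.

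First, I would apply Fubini to $\kappa(u)=\int_0^\infty e^{-ut}\,dF_\kappa(t)$ to obtain
\[
    \int_0^s\kappa(u)\,du=\int_0^\infty\frac{1-e^{-st}}{t}\,dF_\kappa(t),
\]
so that
\[
    \tau(s)=\exp\!\left\{-\int_0^\infty(1-e^{-st})\,\nu(dt)\right\},\qquad \nu(dt):=\psi\,\frac{dF_\kappa(t)}{t}.
\]
Since $\int_0^\infty(t\wedge 1)\,\nu(dt)\le\psi<\infty$, $\nu$ is a legitimate L\'evy measure on $(0,\infty)$, and $T_\tau$ is (zero-drift) infinitely divisible with this L\'evy measure.

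Next I would compute the tail of $\nu$. The already-established relation $\bar F_\kappa(t)\sim c_\kappa t^{-(a-1)}L(t)$ together with integration by parts gives
\[
    \nu((t,\infty))=\psi\int_t^\infty u^{-1}\,dF_\kappa(u)=\psi\!\left[t^{-1}\bar F_\kappa(t)-\int_t^\infty u^{-2}\bar F_\kappa(u)\,du\right].
\]
By Karamata's theorem $\int_t^\infty u^{-2}\bar F_\kappa(u)\,du\sim (c_\kappa/a)\,t^{-a}L(t)$, and combining with $t^{-1}\bar F_\kappa(t)\sim c_\kappa t^{-a}L(t)$ produces $\nu((t,\infty))\sim(1-1/a)c_\kappa\psi\,t^{-a}L(t)$. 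Regular variation of this quantity makes the normalized truncation of $\nu$ to $(1,\infty)$ a subexponential distribution, and the EGV theorem for infinitely divisible laws then gives $P\{T_\tau>t\}\sim\nu((t,\infty))$, which is precisely (\ref{main}).

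The main obstacle is the final step: the EGV transfer must be applied in the setting where $\nu$ has \emph{infinite} total mass (since $\kappa(u)\sim(1-\rho)/(\beta_1 u)$ as $u\to\infty$ forces $\int_0^\infty\nu(dt)=\infty$, so $T_\tau$ is not compound Poisson). The standard form of EGV (e.g., Theorem~A3.20 in Embrechts--Kl\"uppelberg--Mikosch) handles exactly this case; if one wished to avoid citing it, one could split $\nu$ into small-jump and large-jump parts, treat the large-jump part as a compound Poisson sum with regularly varying jumps via Pakes-type asymptotics, and bound the small-jump contribution by an exponential-moment estimate. Either way, once the L\'evy representation of the first step is in place the rest is essentially bookkeeping.
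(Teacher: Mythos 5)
Your proof is correct, and it takes a genuinely different route from the paper's. The paper works entirely at the level of Laplace--Stieltjes transforms: it applies a Bingham--Goldie--Teugels Tauberian lemma (Lemma~\ref{Cohen}) to pass from the tail of $T_\kappa$ to the small-$s$ remainder $\kappa_{m-1}(s)$, plugs that into the exponential relation $\tau(s)=\exp\{-\psi\int_0^s\kappa(u)\,du\}$ to obtain the remainder $\tau_m(s)$, and then inverts via Lemma~\ref{Cohen} again; because that lemma excludes integer indices, the paper must treat integer $a$ separately using the de~Haan class $\Pi$ and its Abel--Tauber counterpart (Lemma~\ref{Lemma 4.5new}). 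You instead recognize, via Fubini, that $\tau(s)=\exp\{-\int_0^\infty(1-e^{-st})\,\nu(dt)\}$ with $\nu(dt)=\psi\,t^{-1}dF_\kappa(t)$ is the Laplace transform of an infinitely divisible law on $[0,\infty)$ with zero drift and L\'evy measure $\nu$, compute $\bar\nu(t)=\nu((t,\infty))$ by integration by parts and Karamata (the subtraction $t^{-1}\bar F_\kappa(t)-\int_t^\infty u^{-2}\bar F_\kappa(u)\,du$ is safe since the ratio of the two terms tends to $1/a\neq 1$), and then invoke the Embrechts--Goldie--Veraverbeke equivalence $\bar F_{T_\tau}(t)\sim\bar\nu(t)$ for infinitely divisible laws with subexponentially-tailed L\'evy measure. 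This avoids any case distinction on $a$, since regular variation of $\bar\nu$ with any negative index immediately gives subexponentiality of the normalized restriction of $\nu$ to $(1,\infty)$. Your observation that $\nu$ has infinite total mass (because $\kappa(u)\sim(1-\rho)/(\beta_1 u)$ as $u\to\infty$, forcing $\int_0^\epsilon t^{-1}dF_\kappa(t)=\infty$) is the right thing to flag, and you correctly note that the standard EGV statement for infinitely divisible distributions handles it. The trade-off is that your argument is structurally cleaner and case-free, but rests on a heavier probabilistic input (subordinator representation plus EGV), whereas the paper's is longer and bifurcated but stays within elementary regular-variation/Tauberian machinery already in use elsewhere in the paper.
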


To prove Theorem \ref{theorem-T-tail}, let us list some notations and preliminary results, which will be used.
Let $F(x)$ be any distribution on $[0,\infty)$ with the LST $\phi(s)$.
We denote the $n$th moment of $F(x)$ by $\phi_n$, $n\ge 0$.
It is well known (for example, Proposition~8.44 in Breiman~\cite{Breiman:1992}) that if $\phi_n<\infty$, then
\begin{equation}
\phi(s)=\sum_{k=0}^{n}\frac{\phi_k}{k!}(-s)^k + o(s^n),\quad n\ge 0.
\end{equation}
Next, if $\phi_n<\infty$, we introduce the notation $\phi_n(s)$ and $\widehat{\phi}_n(s)$, defined by
\begin{eqnarray}
\phi_n(s)&\stackrel{\rm def}{=}&(-1)^{n+1}\left\{\phi(s)-\sum_{k=0}^{n}\frac{\phi_k}{k!}(-s)^k\right\},\quad n\ge 0,\label{phi1}\\
\widehat{\phi}_n(s)&\stackrel{\rm def}{=}&\phi_n(s)/s^{n+1},\quad n\ge 0.\label{phi2}
\end{eqnarray}
It follows that if $\phi_n<\infty$, then for $n\ge 1$,
\begin{eqnarray}
\lim_{s\downarrow 0}\widehat{\phi}_{n-1}(s)&=&\phi_n/n!,\\
s\widehat \phi_{n}(s)&=&\frac 1 {n!}\phi_{n}-\widehat \phi_{n-1}(s).
\end{eqnarray}

In addition, if $\phi_n<\infty$, let us define a sequence of functions $F_k$ recursively by:
$F_1(t)=F(t)$ and
\begin{equation}\label{definition-Fk(t)}
1-F_{k+1}(t) \stackrel{\rm def}{=} \int_t^{\infty}(1-F_k(x))dx,\quad k=1,2,\ldots,n.
\end{equation}
It is not difficult to check that $1-F_{k+1}(0)=\phi_k/k!$ and $F_{k+1}(t)$ has the LST $\widehat \phi_{k-1}(s)$. Namely,
\begin{equation}
\widehat \phi_{k-1}(s) = \int_0^{\infty}e^{-st}(1-F_{k}(t))dt,\quad k=1,2,\ldots,n.\label{laplace-phi(t)}
\end{equation}
\begin{lemma}[pp.333-334 in \cite{Bingham:1989}]
\label{Cohen}
Assume that $n<d<n+1$, $n\in\{0,1,2,\ldots\}$. Then the following are equivalent:
\begin{eqnarray}
1-F(t) &\sim&  t^{-d} L(t),\quad t\to\infty,\\
\phi_n(s) &\sim& \frac{\Gamma(d-n)\Gamma(n+1-d)} {\Gamma(d)} s^{d}L(1/s),\quad s\downarrow 0.
\end{eqnarray}
\end{lemma}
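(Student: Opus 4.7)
The plan is to reduce the equivalence to the basic $0<\alpha<1$ case of the Karamata Tauberian theorem (e.g., Feller Vol.~II p.443) by exploiting the iterated integration transform $F\mapsto F_{k+1}$ of (\ref{definition-Fk(t)}) to shift the tail index down into $(0,1)$ before inverting.

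For the forward direction, I would apply Karamata's theorem on integrating regularly varying tails (e.g., p.28 in \cite{Bingham:1989}) $n$ times to (\ref{definition-Fk(t)}); each application raises the exponent of $t$ by one, producing
\begin{equation*}
1-F_{n+1}(t)\sim \frac{t^{-(d-n)}L(t)}{(d-1)(d-2)\cdots(d-n)}=\frac{\Gamma(d-n)}{\Gamma(d)}\,t^{-(d-n)}L(t),\qquad t\to\infty,
\end{equation*}
where the remaining exponent $d-n\in(0,1)$. The nondecreasing function $U(t):=F_{n+1}(t)-F_{n+1}(0^+)$ has finite total mass $U(\infty)=\phi_n/n!$, its tail at infinity equals $1-F_{n+1}(t)$, and its LST equals $\widehat\phi_{n-1}(s)$ by (\ref{laplace-phi(t)}). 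The classical Karamata Tauberian theorem (tail form for $0<\alpha<1$) then yields
\begin{equation*}
\frac{\phi_n}{n!}-\widehat\phi_{n-1}(s)\sim\frac{\Gamma(d-n)\Gamma(n+1-d)}{\Gamma(d)}\,s^{d-n}L(1/s),\qquad s\downarrow 0.
\end{equation*}
To close the argument, I would invoke the recurrence $s\widehat\phi_n(s)=\phi_n/n!-\widehat\phi_{n-1}(s)$ displayed just before (\ref{definition-Fk(t)}) and then the identity $\phi_n(s)=s^{n+1}\widehat\phi_n(s)$ from (\ref{phi2}); together they multiply the previous asymptotic by $s^n$ and deliver exactly the claimed $\phi_n(s)\sim\frac{\Gamma(d-n)\Gamma(n+1-d)}{\Gamma(d)}\,s^{d}L(1/s)$.

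For the converse ((ii) $\Rightarrow$ (i)), I would run this chain backwards: first read off the asymptotic of $\phi_n/n!-\widehat\phi_{n-1}(s)$ from the hypothesis on $\phi_n(s)$, then apply the Tauberian half of Karamata to the monotone $U$ to recover the tail of $1-F_{n+1}$, and finally de-iterate $n$ times back to $1-F$ by repeatedly applying the monotone density theorem (Theorem~1.7.2 in \cite{Bingham:1989}) to each monotone $1-F_k$. The main obstacle I expect is exactly this de-iteration: inverting $F_k\mapsto F_{k+1}$ requires differentiating a regularly varying function, which is only automatic under an eventual-monotonicity hypothesis, and that hypothesis is supplied here by the built-in monotonicity of every $1-F_k$. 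A secondary bookkeeping step is checking that the product $(d-1)(d-2)\cdots(d-n)$ collapses to $\Gamma(d)/\Gamma(d-n)$ and that the Tauberian coefficient $\Gamma(1-(d-n))=\Gamma(n+1-d)$ combines with it to yield the constant $\Gamma(d-n)\Gamma(n+1-d)/\Gamma(d)$ stated in (ii).
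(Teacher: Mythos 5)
The paper does not prove this lemma; it is quoted verbatim from Bingham, Goldie and Teugels (pp.\ 333--334) and used as a black box, so there is no internal proof to compare against. Your reduction argument is nevertheless correct and, as far as I can tell, in the spirit of the source. The forward chain is sound: $n$ applications of Karamata's integration theorem bring the tail index from $-d$ to $-(d-n)\in(-1,0)$ and accumulate the factor $(d-1)\cdots(d-n)=\Gamma(d)/\Gamma(d-n)$; the nondecreasing $U=F_{n+1}-F_{n+1}(0^+)$ has LST $\widehat\phi_{n-1}$ and remaining mass $\phi_n/n!$, so the Karamata Tauberian theorem in its remainder form gives $\phi_n/n!-\widehat\phi_{n-1}(s)\sim\Gamma(n+1-d)\,\frac{\Gamma(d-n)}{\Gamma(d)}\,s^{d-n}L(1/s)$; multiplying by $s^n$ via the recursion $s\widehat\phi_n(s)=\phi_n/n!-\widehat\phi_{n-1}(s)$ and $\phi_n(s)=s^{n+1}\widehat\phi_n(s)$ recovers the stated constant and exponent exactly. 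For the converse you correctly identify that the only delicate step is the $n$-fold de-integration, and you correctly observe that the monotone density theorem applies at each stage because each $1-F_k$ is nonincreasing by construction; also note that each application of the integration step in the forward direction legitimately requires the running index $d-k+1>1$, which holds because $d>n\ge k$. One small wording quibble: each integration \emph{increases} the exponent of $t$ by one (from $-d$ toward $-(d-n)$), i.e.\ it lowers the decay rate, which is what you mean but ``raises the exponent'' reads ambiguously. Substantively the argument is complete and the Gamma bookkeeping checks out.
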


In the following, we will divide the proof of Theorem \ref{theorem-T-tail} into two parts, depending on whether $a$ is an integer or not.
First let us rewrite (\ref{tau00}) as follows:
\begin{equation}
\tau(s) = 1 -\psi\int_0^s\kappa(u)du +\sum_{k=2}^{\infty}{(-\psi)^k\over k!}\left(\int_0^s\kappa(u)du\right)^k.\label{tau1}
\end{equation}

\subsection{A proof of Theorem~\ref{theorem-T-tail} for non-integer $a>1$}

Suppose that $m<a<m+1$, $m\in\{1,2,\ldots\}$. By (\ref{P{K>x}}), $P\{T_{\kappa}>t\}\sim c_{\kappa}\cdot t^{-a+1}L(t)$. So, $\kappa_{m-1}<\infty$ and $\kappa_{m}=\infty$.
Define $\kappa_{m-1}(s)$ in a manner similar to that in (\ref{phi1}). By Lemma~\ref{Cohen} ,
\begin{equation}
\kappa_{m-1}(s) \sim \frac{\Gamma(a-m)\Gamma(m+1-a)} {\Gamma(a-1)} c_{\kappa} s^{a-1}L(1/s),\quad s\downarrow 0.
\end{equation}
By Karamata's theorem (p.28 in \cite{Bingham:1989}),
\begin{equation}\label{int-kappa-{m-1}(u)du}
\int_0^s\kappa_{m-1}(u)du \sim \frac{\Gamma(a-m)\Gamma(m+1-a)} {\Gamma(a-1)a} c_{\kappa} s^{a}L(1/s),\quad s\downarrow 0.
\end{equation}
Next, we present a relationship between $\tau_m(s)$ and $\kappa_{m-1}(s)$. By the definition of $\kappa_{m-1}(s)$,
\begin{equation}
\kappa(s) = \sum_{k=0}^{m-1}\frac{\kappa_k}{k!}(-s)^k+(-1)^{m}\kappa_{m-1}(s),
\end{equation}
where $\kappa_{m-1}(s)=o(s^{m-1})$ and $\kappa_{m-1}(s)/s^m\to\infty$ as $s\downarrow 0$. Hence,
\begin{equation}
\int_0^s\kappa(u)du = -\sum_{k=1}^{m}\frac{\kappa_{k-1}}{k!}(-s)^k+(-1)^{m}\int_0^s\kappa_{m-1}(u)du,\label{kappa1}
\end{equation}
where $\int_0^s\kappa_{m-1}(u)du=o(s^m)$ and $\int_0^s\kappa_{m-1}(u)du/s^{m+1}\to\infty$ as $s\downarrow 0$.
\newline

From (\ref{tau1}) and (\ref{kappa1}), there are constants $\{v_k;\ k=0,1,2,\ldots,m\}$ satisfying
\begin{equation}
\tau(s) = \sum_{k=0}^{m}v_k (-s)^k+(-1)^{m+1}\psi\int_0^s\kappa_{m-1}(u)du + O(s^{m+1}),\quad s\downarrow 0.\label{tau(s)-new}
\end{equation}
Define $\tau_m(s)$ in a manner similar to that in (\ref{phi1}). By (\ref{tau(s)-new}),
\begin{equation}
\tau_m(s) = \psi\int_0^s\kappa_{m-1}(u)du + O(s^{m+1}) \sim \psi\int_0^s\kappa_{m-1}(u)du,\quad s\downarrow 0.\label{case1result}
\end{equation}
By (\ref{int-kappa-{m-1}(u)du}) and (\ref{case1result}),
\begin{equation}\label{tau_m(s)}
\tau_m(s) \sim \frac{\Gamma(a-m)\Gamma(m+1-a)} {\Gamma(a)} \cdot\frac {a-1} {a} c_{\kappa}\psi s^{a}L(1/s),\quad s\downarrow 0.
\end{equation}
Applying Lemma \ref{Cohen},
\begin{equation}
P\{T_{\tau}>t\} \sim \frac {a-1} {a} c_{\kappa}\psi t^{-a}L(t),\quad t\to\infty,\label{(a>1)4.2-17}
\end{equation}
which completes the proof of Theorem \ref{theorem-T-tail} for non-integer $a>1$.

\subsection{A proof of Theorem \ref{theorem-T-tail} for integer $a>1$}

Suppose that $a=m\in\{2,3,\ldots\}$.  By (\ref{P{K>x}}), $P\{T_{\kappa}>t\}\sim c_{\kappa}\cdot t^{-m+1}L(t)$.
So, $\kappa_{m-2}<\infty$. Unfortunately, whether $\kappa_{m-1}$ is finite or not remains uncertain, which is determined essentially by whether
$\int_x^{\infty}t^{-1}L(t)dt$ is convergent or not. For this reason we need to sharpen our tools by
introducing the de Haan class $\Pi$ of slowly varying functions.
\begin{definition}[e.g., Bingham et al. (1989) \cite{Bingham:1989}]
A function $F:(0,\infty)\to (0,\infty)$ belongs to the de Haan class $\Pi$ at $\infty$ if there exists a function $H:(0,\infty)\to (0,\infty)$ such that
\begin{equation}
\lim_{t\uparrow \infty}\frac {F(xt)-F(t)}{H(t)} = \log x\quad\mbox{ for all }x>0,\label{deHaan1}
\end{equation}
where the function $H$ is called the auxiliary function of $F$.
\end{definition}

Recall the definition of $F_{k}(t)$ given in (\ref{definition-Fk(t)}). Repeatedly using Karamata's theorem (p.27 in \cite{Bingham:1989}) and the monotone density theorem (p.39 in \cite{Bingham:1989}),
we know that $1-F(t)\sim t^{-n}L(t)$ is equivalent to $1-F_n(t)\sim t^{-1}L(t)/(n-1)!$, which in turn is equivalent to
$\int_0^t(1-F_n(x))dx\in \Pi$ with an auxiliary function that can be taken as $L(t)/(n-1)!$ (see for example, p.335 in \cite{Bingham:1989}).
By (\ref{laplace-phi(t)}),  $\int_0^t(1-F_n(x))dx$ has the LST $\widehat{\phi}_{n-1}(s)$. Applying Theorem~3.9.1 in \cite{Bingham:1989} (pp.172--173), we have the following lemma.
\begin{lemma}
\label{Lemma 4.5new}
Let $F(x)$ be a probability distribution function and $n\in\{1,2,\ldots\}$. Then, the following two statements are equivalent:
\begin{flalign}
\begin{split}
\mbox{(i)}&\quad 1-F(t)\sim t^{-n}L(t),\quad t\to\infty; \label{deHaan3}
\end{split}&\\
\begin{split}
\mbox{(ii)}&\quad \lim_{s\downarrow 0}\frac {\widehat{\phi}_{n-1}(xs)-\widehat{\phi}_{n-1}(s)}{L(1/s)/(n-1)!}=-\log x, \quad\mbox{ for all }x>0.\label{deHaan4}
\end{split}&
\end{flalign}
\end{lemma}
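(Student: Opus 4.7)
The plan is to string together three well-known ingredients from Bingham, Goldie and Teugels \cite{Bingham:1989}: Karamata's theorem, the monotone density theorem, and the Abelian--Tauberian theorem for the de Haan class $\Pi$ (Theorem~3.9.1 there). The whole proof is the bridge that the paragraph just before the lemma statement is setting up; almost every step is a citation, and the work lies in checking that hypotheses and auxiliary functions line up correctly.

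First I would reduce the tail condition (i) on $F$ to a tail condition on $F_n$. Because $1-F(t)$ is monotone, Karamata's theorem gives $\int_t^{\infty}(1-F(x))\,dx \sim t^{-(n-1)}L(t)/(n-1)$ when $n\ge 2$, and the monotone density theorem converts this back to a tail statement on the monotone density of $1-F_2$. Iterating this pair of steps $n-1$ times, using the recursive definition \eqref{definition-Fk(t)}, one obtains the equivalence
\[
1-F(t)\sim t^{-n}L(t)\quad\Longleftrightarrow\quad 1-F_n(t)\sim \frac{t^{-1}L(t)}{(n-1)!},\qquad t\to\infty.
\]
This is the standard chain of reductions and is valid because every $1-F_k$ is monotone nonincreasing, which is exactly what the monotone density theorem needs.

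Next I would recast the right-hand tail asymptotic as a de Haan statement on the antiderivative. By a further application of Karamata's theorem (and the characterisation of $\Pi$ in terms of its integral representation, p.~335 in \cite{Bingham:1989}),
\[
1-F_n(t)\sim \frac{t^{-1}L(t)}{(n-1)!}\quad\Longleftrightarrow\quad \int_0^t(1-F_n(x))\,dx\in\Pi\ \text{with auxiliary function }L(t)/(n-1)!.
\]
But by \eqref{definition-Fk(t)} together with $1-F_{n+1}(0)=\phi_{n-1}/(n-1)!<\infty$ (which follows from $1-F(t)=O(t^{-n}L(t))$ and $n\ge 1$), the antiderivative equals $[1-F_{n+1}(0)] - [1-F_{n+1}(t)]$, so up to an additive constant it is exactly the distribution function $F_{n+1}(t)$, whose LST is $\widehat{\phi}_{n-1}(s)$ by \eqref{laplace-phi(t)}.

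Finally I would invoke Theorem~3.9.1 of \cite{Bingham:1989}, the Abelian--Tauberian theorem for $\Pi$: if $U$ is a nondecreasing right-continuous function on $[0,\infty)$ with Laplace transform $\widehat U$, then $U\in\Pi$ at $\infty$ with auxiliary function $g$ iff $\widehat U(s)\in\Pi$ at $0$ with auxiliary function $g(1/s)$, and the two de Haan limits carry opposite signs (because the transformation $t\mapsto 1/s$ is orientation reversing). Applied to $U(t)=F_{n+1}(t)$ with $g(t)=L(t)/(n-1)!$, and using $\widehat U(s)=\widehat{\phi}_{n-1}(s)$, this yields exactly (ii) with the minus sign in front of $\log x$. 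The Tauberian direction requires the monotonicity of $F_{n+1}$, which we have, so both implications go through.

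The main obstacle is really the bookkeeping: one must be sure that the auxiliary function $L(t)/(n-1)!$ produced by the two Karamata/monotone-density steps is the \emph{same} one produced by Theorem~3.9.1, and that the constant $1-F_{n+1}(0)$ that appears when identifying $\int_0^t(1-F_n(x))\,dx$ with $F_{n+1}(t)$ does not corrupt the $\Pi$-limit (it does not, because additive constants are invisible to the difference $\widehat{\phi}_{n-1}(xs)-\widehat{\phi}_{n-1}(s)$). Once these verifications are in place, the equivalence (i)$\Leftrightarrow$(ii) follows immediately.
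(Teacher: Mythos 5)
Your proof is correct and follows essentially the same route as the paper's own argument, which is laid out in the paragraph immediately preceding the lemma: Karamata plus the monotone density theorem iterated to pass from $1-F$ to $1-F_n$, then the characterisation of the de Haan class $\Pi$ applied to $\int_0^t(1-F_n(x))\,dx$, and finally the Abelian--Tauberian theorem (Theorem~3.9.1 in Bingham et al.) to transfer the $\Pi$-statement to the Laplace transform $\widehat{\phi}_{n-1}$. You are in fact slightly more careful than the paper's sketch in one place: the paper writes $\int_0^t(1-F_n(x))\,dx = 1-F_{n+1}(x)$, which is off by the additive constant $1-F_{n+1}(0)=\phi_n/n!$, and you correctly observe that this constant is harmless because it cancels in the difference $\widehat{\phi}_{n-1}(xs)-\widehat{\phi}_{n-1}(s)$.
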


Since $\kappa_{m-2}<\infty$, we can define $\kappa_{m-2}(s)$ in a manner similar to that in (\ref{phi1}), such that,
\begin{equation}
\kappa(s) = \sum_{k=0}^{m-2}\frac{\kappa_k}{k!}(-s)^k+(-1)^{m-1}\kappa_{m-2}(s),\label{case2result1}
\end{equation}
where $\kappa_{m-2}(s)=o(s^{m-2})$ as $s\downarrow 0$. Therefore,
\begin{equation}
\int_0^s\kappa(u)du = -\sum_{k=1}^{m-1}\frac{\kappa_{k-1}}{k!}(-s)^k+(-1)^{m-1}\int_0^s\kappa_{m-2}(u)du,\label{kappa2}
\end{equation}
where $\int_0^s\kappa_{m-2}(u)du=o(s^{m-1})$ as $s\downarrow 0$.
\newline

It follows from (\ref{tau1}) and (\ref{kappa2}) that for some constants $\{w_k;\ k=0,1,2,\ldots,m\}$,
\begin{equation}
\tau(s) = \sum_{k=0}^{m}w_k (-s)^k+(-1)^{m}\psi\int_0^s\kappa_{m-2}(u)du +o(s^{m}).
\end{equation}
Defining $\widehat{\tau}_{m-1}(s)$ in a manner similar to that in (\ref{phi2}), we have
\begin{equation}
\widehat{\tau}_{m-1}(s) = w_m+ \frac {\psi} {s^m}\int_0^s u^{m-1}\widehat{\kappa}_{m-2}(u)du + o(1),\label{case2tau-2}
\end{equation}
which immediately gives,
\begin{eqnarray}
\widehat{\tau}_{m-1}(xs)&=&w_m+ \frac {\psi} {(xs)^m}\int_0^{xs} u^{m-1}\widehat{\kappa}_{m-2}(u)du + o(1)\nonumber\\
&=&w_m+ \frac {\psi} {s^m}\int_0^{s} u^{m-1}\widehat{\kappa}_{m-2}(xu)du + o(1).\label{case2tau-3}
\end{eqnarray}
By (\ref{case2tau-2}) and (\ref{case2tau-3}),
\begin{equation}
\widehat{\tau}_{m-1}(xs)-\widehat{\tau}_{m-1}(s) = \frac {\psi} {s^m}\int_0^{s} u^{m-1}\left(\widehat{\kappa}_{m-2}(xu)-\widehat{\kappa}_{m-2}(u)\right)du + o(1).\label{case2tau-4}
\end{equation}
Note that $P\{T_{\kappa}>t\}\sim c_{\kappa}\cdot t^{-m+1}L(t)$ and $\kappa_{m-2}<\infty$. Define $\widehat{\kappa}_{m-2}(s)$ in a manner similar to that in (\ref{phi2}). By Lemma~\ref{Lemma 4.5new}, we obtain
\begin{equation}
\widehat{\kappa}_{m-2}(xu)-\widehat{\kappa}_{m-2}(u)\sim -(\log x) c_{\kappa} L(1/u)/(m-2)!\quad \mbox{ as}\ u\downarrow 0.\label{case2kappa-5}
\end{equation}
By Karamata's theorem (p.28 in \cite{Bingham:1989}), we know
\begin{equation}
\int_0^s u^{m-1}\left(\widehat{\kappa}_{m-2}(xu)-\widehat{\kappa}_{m-2}(u)\right)du\sim -(\log x) \frac {c_{\kappa}} {m} s^m L(1/s)/(m-2)!\quad \mbox{ as}\ s\downarrow 0.\label{case2kappa-5add}
\end{equation}
Therefore, by (\ref{case2tau-4}) and (\ref{case2kappa-5add}),
\begin{equation}
\lim_{s\downarrow 0}\frac {\widehat{\tau}_{m-1}(xs)-\widehat{\tau}_{m-1}(s)} {L(1/s)/(m-1)!} = -\frac {m-1} {m} c_{\kappa}\psi \log x.\label{case2tau-6}
\end{equation}
By applying Lemma~\ref{Lemma 4.5new}, we obtain from (\ref{case2tau-6}) that
\begin{equation}
P\{T_{\tau}>t\} \sim \frac {m-1} {m} c_{\kappa}\psi t^{-m}L(t),\quad t\to\infty,
\end{equation}
which completes the proof of Theorem \ref{theorem-T-tail} for integer $a=m\in \{2,3,\ldots\}$.

\section{Second order approximation for the tail probability of $T_{\theta}+T_{\beta}+T_{\tau}$}
\label{sec:4}

In this section, we will provide the second order asymptotic result (Theorem~\ref{the:tailT}) for the tail probability $P\{T_{\theta}+T_{\beta}+T_{\tau}>t\}$, which will be used in the next section to obtain the second order asymptotic expansion for the tail probability of $L_\mu$.
This requires further specifications on the second term of the asymptotic tail probability of service time $T_{\beta}$.
For this reason, Assumption A (stated in the introduction section) is made.

We first study the second order asymptotic behaviour of tail probability $P\{T_{\theta}>t\}$. Recall the definition of $T_{\theta}$ in (\ref{def-T-theta}), which immediately gives
\begin{equation}\label{6-3}
P\{T_{\theta}>t\} = \rho P\{T_{\kappa}>t\}.
\end{equation}
Refering to Remark \ref{kappa-geo-sum},
$T_{\kappa}$ can be viewed as a geometric sum of i.i.d. r.v.'s, which suggests that we need second order asymptotic properties of such a random sum.

Suppose that  $\{p_n\}_{n=0}^{\infty}$ is a discrete probability measure, and $G(t)=\sum_{n=0}^{\infty}p_n F^{*n}(t)$, $t\ge 0$.
\begin{lemma}[Theorem~2 in  \cite{Willekens1992}]
\label{Willekens-Th}
Suppose that $\sum_{n=0}^{\infty}p_nz^n$ is analytic at $z=1$. Let $F\in\mathcal S$ be a probability distribution function with finite mean $\phi_1$ and probability density function $f(t)=dF(t)/dt$.
Then the following assertions are equivalent:
\begin{flalign}
\begin{split}
\mbox{(i)}&\quad \lim_{t\to\infty}\frac {\bar{F}^{*2}(t)-2\bar{F}(t)}{f(t)}=2\phi_1; \label{H-1}
\end{split}&\\
\begin{split}
\mbox{(ii)}&\quad \lim_{t\to\infty}\frac {\bar{G}(t)-(\sum_{n=1}^{\infty}np_n)\bar{F}(t)}{f(t)}=\phi_1\sum_{n=2}^{\infty}n(n-1)p_n.\label{H-2}
\end{split}&
\end{flalign}
\end{lemma}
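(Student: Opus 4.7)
The plan is to establish the equivalence by first proving the forward implication (i) $\Rightarrow$ (ii), which is the direction used in the sequel, and then obtaining the converse by specialization. The heart of the proof is a second-order refinement of the standard first-order subexponential convolution asymptotic $\bar{F}^{*n}(t) \sim n \bar{F}(t)$.

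First I would establish, by induction on $n$, the claim
\begin{equation*}
\bar{F}^{*n}(t) - n \bar{F}(t) \sim n(n-1) \phi_1 f(t) \quad \text{as } t \to \infty, \quad n \geq 1,
\end{equation*}
where the base case $n=2$ is precisely assumption (i), and $n=1$ is trivial. For the inductive step from $n$ to $n+1$, I would start from the convolution identity
\begin{equation*}
\bar{F}^{*(n+1)}(t) = \bar{F}(t) + \int_0^t \bar{F}^{*n}(t-x) f(x)\,dx,
\end{equation*}
substitute the induction hypothesis into the integrand, and split the resulting integral into a first-order piece $n\int_0^t \bar{F}(t-x) f(x)\,dx = n\bigl[\bar{F}^{*2}(t)-\bar{F}(t)\bigr]$, to which (i) applies directly, and a second-order piece of order $n(n-1)\phi_1 (f*f)(t)$, whose asymptotic is governed by a density-level subexponential property implicit in (i). Careful bookkeeping of the $o(f(t))$ remainders in the induction hypothesis then combines the pieces to give the claim at $n+1$.

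Once the inductive claim is in hand, (ii) follows by term-by-term summation:
\begin{equation*}
\bar{G}(t) - \Bigl(\sum_{n \geq 1} n p_n\Bigr) \bar{F}(t) = \sum_{n \geq 2} p_n \bigl[ \bar{F}^{*n}(t) - n \bar{F}(t) \bigr].
\end{equation*}
Analyticity of $P(z) = \sum_n p_n z^n$ at $z=1$ furnishes geometric decay of $p_n$ in a neighborhood of $1$, so that $\sum n(n-1) p_n < \infty$, and standard Kesten-type bounds of the form $\bar{F}^{*n}(t) \leq c(\varepsilon)(1+\varepsilon)^n \bar{F}(t)$ valid for $F\in\mathcal{S}$ provide the domination needed to interchange sum and limit. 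The limit then equals $\phi_1 \sum_{n \geq 2} n(n-1) p_n \cdot f(t)$, which is exactly (ii). The converse (ii) $\Rightarrow$ (i) is obtained by specialization: choosing $p_2 = 1$ and $p_n = 0$ for $n \neq 2$ gives $P(z) = z^2$ (analytic at $1$), $\bar{G} = \bar{F}^{*2}$, $\sum n p_n = 2$, $\sum n(n-1) p_n = 2$, so that (ii) collapses term-for-term to (i).

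The main obstacle will be the rigorous asymptotic handling of the density convolution $f * f$ in the inductive step: density-level subexponentiality is strictly stronger than subexponentiality of $F$, and one must show that the tail form encoded in assumption (i) is sufficient to guarantee the required behavior of $f*f$ at each stage of the induction, uniformly enough in $n$ to pass both the limit and, subsequently, the summation against $\{p_n\}$.
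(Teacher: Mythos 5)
The paper does not supply a proof; this lemma is quoted verbatim from Theorem~2 of Willekens and Teugels (1992), so your attempt can only be assessed on its own merits. Your inductive step, as written, produces the wrong coefficient. Substituting the induction hypothesis $\bar{F}^{*n}(s)=n\bar F(s)+n(n-1)\phi_1 f(s)+o(f(s))$ into the integrand of $\bar F^{*(n+1)}(t)=\bar F(t)+\int_0^t\bar F^{*n}(t-x)f(x)\,dx$ and evaluating your two pieces gives $n[\bar F^{*2}(t)-\bar F(t)]\approx n\bar F(t)+2n\phi_1 f(t)$ and $n(n-1)\phi_1(f*f)(t)\approx 2n(n-1)\phi_1 f(t)$ (using $(f*f)(t)\sim 2f(t)$), hence $\bar F^{*(n+1)}(t)-(n+1)\bar F(t)\approx 2n^2\phi_1 f(t)$. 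The claim being proved requires $(n+1)n\,\phi_1 f(t)$; already at $n=2$ this is $8\phi_1 f(t)$ versus the correct $6\phi_1 f(t)$ (which the one-big-jump heuristic $\bar F^{*3}(t)\approx 3E[\bar F(t-X_2-X_3)]\approx 3\bar F(t)+6\phi_1 f(t)$ confirms). The error is not just the density-subexponentiality technicality you flag; it is that the induction hypothesis is a large-argument asymptotic and cannot be inserted into the integrand near $x\approx t$, where $t-x$ is small. Writing $g_n(s)=\bar F^{*n}(s)-n\bar F(s)$, the boundary contribution $\int_{t/2}^t g_n(t-x)f(x)\,dx$ behaves like $f(t)\int_0^\infty g_n(y)\,dy$, and since $\int_0^\infty \bar F^{*n}(y)\,dy=n\phi_1=n\int_0^\infty\bar F(y)\,dy$, this vanishes exactly. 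The bulk region $x\in[0,t/2]$ then contributes $n(n-1)\phi_1\cdot\tfrac12(f*f)(t)\sim n(n-1)\phi_1 f(t)$, half of what you wrote, and the total $[2n+n(n-1)]\phi_1 f(t)=(n+1)n\,\phi_1 f(t)$ comes out right. This zero-mean cancellation is the essential missing idea in your sketch, and the factor-of-two error would propagate into (ii).

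Separately, your converse argument is not valid: the lemma fixes the sequence $\{p_n\}$ and asserts (i)$\Leftrightarrow$(ii) for that sequence. Specializing to $p_2=1$, $p_n=0$ otherwise shows (ii)$\Rightarrow$(i) only for that degenerate choice; it says nothing when (ii) is assumed for, say, the geometric weights $p_n=(1-\rho)\rho^{n-1}$ actually used in the paper. A genuine converse would have to extract the $\bar F^{*2}$ asymptotic from the compound tail $\bar G$ for the given $\{p_n\}$. (Only (i)$\Rightarrow$(ii) is used downstream, but the lemma as stated is an equivalence and your proof does not deliver it.)
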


As a special case of Lemma \ref{Willekens-Th}, we set $p_0=0$ and $p_n=(1-\rho)\rho^{n-1},\ n\ge 1$. Then, $\sum_{n=1}^{\infty}np_n=1/(1-\rho)$ and $\sum_{n=2}^{\infty}n(n-1)p_n=2\rho/(1-\rho)^2$.
So, (\ref{H-2}) can be written as
\begin{equation}\label{Willekens-2nd-formula}
\bar{G}(t)=\frac 1 {1-\rho}\bar{F}(t) + \frac {2\rho\phi_1} {(1-\rho)^2} f(t) +o(f(t)).
\end{equation}

With the aid of (\ref{Willekens-2nd-formula}), we are ready to present the second order asymptotic expansion of the tail probability of $T_{\kappa}$.
Set $\bar{F}(t)=\bar{F}_{\beta}^{(e)}(t)$  in Lemma \ref{Willekens-Th}, i.e., $\bar{F}(t)=(1/\beta_1)\int_t^{\infty}\bar{F}_{\beta}(x)dx$ and $f(t)=(1/\beta_1)\bar{F}_{\beta}(t)$. It follows from Assumption A that
\begin{equation}
\bar{F}_{\beta}^{(e)}(t)=\frac {r_1} {\beta_1(a-1)} t^{-a+1}+\frac {r_2} {\beta_2(a+h-1)}t^{-a-h+1}L_0(t) (1+o(1)),
\end{equation}
which implies that $\bar{F}_{\beta}^{(e)}\in 2RV(-a+1,-h)$. By Proposition 3.8 in \cite{Liu-Mao-Hu:2017}, we know that (\ref{H-1}) holds for $\bar{F}(t)=\bar{F}_{\beta}^{(e)}(t)$.

Recall Remark \ref{kappa-geo-sum}. $T_{\kappa}$ can be viewed as a geometric sum of i.i.d. random variables with distribution $F_{\beta}^{(e)}(x)$.
Note that $\phi_1=(1/\beta_1)\int_0^{\infty}t \bar{F}_{\beta}(t)dt=\beta_2/(2\beta_1)$, where $\beta_2=\int_0^{\infty} t^2 dF_{\beta}(t)=\int_0^{\infty}2t\bar{F}_{\beta}(t)dt<\infty$. It follows from (\ref{Willekens-2nd-formula}) that
\begin{equation}\label{T-kappa-2nd-order}
P\{T_{\kappa}>t\}=\frac 1 {(1-\rho)\beta_1}\left(\int_t^{\infty}\bar{F}_{\beta}(x)dx\right)+\frac {\rho\beta_2} {(1-\rho)^2 \beta_1^2}\bar{F}_{\beta}(t)+o(\bar{F}_{\beta}(t))\quad\mbox{ as }t\to\infty.
\end{equation}

\begin{remark} \label{rem:4.2}
In Assumption A, we have assumed $a>2$ to ensure that the service time $T_{\beta}$ has a finite second moment $\beta_2$, which allows us to use
the results in \cite{Willekens1992}.
Here, we would like to mention that for the situation where $\beta_2=\infty$, the second order asymptotic properties of a random sum with $2RV$ summands have been studied partially in Leng and Hu (2014)~\cite{Leng-Hu:2014}, and their result is presented for certain cases with different combinations of parameter values $a$ and $h$.
Applying the result in \cite{Leng-Hu:2014}, one may proceed with a discussion on those cases with $1<a\le 2$ in a way similar to what follows, except that more complicated expressions would be involved in the derivations.
\end{remark}

Furthermore, in virtue of (\ref{6-3}) and (\ref{T-kappa-2nd-order}), one can immediately write out the second order asymptotic expansion for $P\{T_{\theta}>t\}$ as follows:
\begin{equation}\label{T-theta>x}
    P\{T_{\theta}>t\} = \frac {\lambda r_1} {(a-1)(1-\rho)} t^{-a+1}+\Delta_{T_{\theta}}(t),\quad\mbox{ as }t\to\infty,
\end{equation}
where
\begin{equation}\label{T-theta>x-2}
\Delta_{T_{\theta}}(t) = \left \{ \begin{array}{ll}
\displaystyle \left[\frac {\lambda^2\beta_2 r_1} {(1-\rho)^2 }+\frac {\lambda r_2} {a(1-\rho)} L_0(t)\right]t^{-a}+o(t^{-a}L_0(t)), & \mbox{for $h=1$}, \\
\displaystyle \frac {\lambda r_2} {(a+h-1)(1-\rho)} t^{-a-h+1}L_0(t) +o(t^{-a-h+1}L_0(t)), & \mbox{for $0<h<1$}, \\
\displaystyle \frac {\lambda^2\beta_2 r_1} {(1-\rho)^2 }t^{-a}+o(t^{-a}), & \mbox{for $h>1$}. \end{array} \right.
\end{equation}

The following two lemmas will be used to complete the two-term asymptotic expansion of $P\{T_{\theta}+T_{\beta}+T_{\tau}>t\}$ as $t\to\infty$.

\begin{lemma}[p.48 in \cite{Foss2011}]
\label{Lemma 3.4new}
Let $F$, $G_1$ and $G_2$ be distribution functions.
Suppose that $F\in\mathcal S$.
If $\bar{G}_i(t)/\bar{F}(t)\to c_i$ as $t\to\infty$ for some $c_i\ge 0, \; i=1,2$, then
$\overline{G_1*G}_2(t)/\bar{F}(t)\to c_1+c_2$ as $t\to\infty$,
where the symbol $G_1*G_2$ stands for the convolution of $G_1$ and $G_2$.
\end{lemma}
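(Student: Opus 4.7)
The plan is a direct three-region decomposition of the convolution tail, exploiting the long-tailedness implicit in $F\in\mathcal S$ together with the tail-equivalences $\bar G_i(t)\sim c_i\bar F(t)$. For independent $X_i\sim G_i$, the natural identity is
\[
\overline{G_1*G_2}(t) \;=\; \bar G_1(t) + \int_0^t \bar G_2(t-s)\,dG_1(s),
\]
so the task reduces to showing that this integral equals $\bar G_2(t)+o(\bar F(t))$. A preliminary observation is the inclusion $\mathcal S\subset\mathcal L$: $\bar F(t-s)/\bar F(t)\to 1$ for each fixed $s\ge 0$. By the tail-equivalence hypothesis, each $G_i$ with $c_i>0$ inherits this long-tailedness, while for $c_i=0$ the weaker estimate $\bar G_i(t-s)=o(\bar F(t))$ is automatic. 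In particular, a slowly-growing auxiliary $h(t)\to\infty$ with $h(t)/t\to 0$ can be chosen so that both $G_1$ and $G_2$ are $h$-insensitive in the sense that $\sup_{0\le s\le h(t)}|\bar G_i(t-s)/\bar G_i(t)-1|\to 0$.

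The second step is to split the integral into pieces over $[0,h(t)]$, $[h(t),t-h(t)]$, and $[t-h(t),t]$. The first piece, by $h$-insensitivity of $G_2$, contributes $\bar G_2(t)(1+o(1))\cdot G_1(h(t))=\bar G_2(t)(1+o(1))$. The third piece is crudely bounded above by $G_1(t)-G_1(t-h(t))=\bar G_1(t-h(t))-\bar G_1(t)$, which is $o(\bar G_1(t))=o(\bar F(t))$ by the $h$-insensitivity of $G_1$.

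The main obstacle is the middle piece $I(t):=\int_{h(t)}^{t-h(t)}\bar G_2(t-s)\,dG_1(s)$, where the full strength of subexponentiality is required. The standard trick is first to establish the corresponding estimate in the case $G_1=G_2=F$ directly from the definition: applying the same three-region split to the identity $\bar F^{*2}(t)=\bar F(t)+\int_0^t\bar F(t-s)\,dF(s)$, the outer two pieces are $\bar F(t)(1+o(1))$ and $o(\bar F(t))$ respectively, so the middle piece equals $\bar F^{*2}(t)-2\bar F(t)+o(\bar F(t))=o(\bar F(t))$, by the defining relation $\bar F^{*2}/\bar F\to 2$. A comparison argument then handles the general case: for any $\epsilon>0$, pick $t$ large enough that $\bar G_i(u)\le(c_i+\epsilon)\bar F(u)$ for $u\ge h(t)$, and bound $I(t)$ in terms of the corresponding $F$-integral via a Fubini swap (which lets one interchange the roles of the two distributions without losing more than a factor $(c_1+\epsilon)(c_2+\epsilon)$). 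This gives $I(t)/\bar F(t)\to 0$ as $t\to\infty$ followed by $\epsilon\downarrow 0$. Combining the three contributions yields $\overline{G_1*G_2}(t)=\bar G_1(t)+\bar G_2(t)+o(\bar F(t))$, and dividing through by $\bar F(t)$ produces the claimed limit $c_1+c_2$.
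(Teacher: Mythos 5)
The paper does not prove this lemma; it is quoted as a known result from Foss, Korshunov and Zachary (2011, p.~48), so there is no in-text argument to compare yours against. Your reconstruction follows the standard textbook proof of convolution closure for subexponential tails (a three-region split, reduction of the middle region to the case $G_1=G_2=F$, and the defining relation $\bar F^{*2}/\bar F\to 2$), and the overall plan is correct.

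Two places need tightening. First, the parenthetical claim that ``both $G_1$ and $G_2$ are $h$-insensitive'' does not follow when $c_i=0$: tail equivalence with limit $0$ says nothing about the ratio $\bar G_i(t-s)/\bar G_i(t)$ (consider $\bar G_i(t)=e^{-t^2}$, which is not long-tailed). For those indices you must instead use, as you yourself note a sentence earlier, the uniform bound $\sup_{0\le s\le h(t)}\bar G_i(t-s)\le\bar G_i(t-h(t))=o(\bar F(t))$, which still gives the first piece $\bar G_2(t)+o(\bar F(t))$ and the third piece $o(\bar F(t))$. Second, ``bound $I(t)$ in terms of the corresponding $F$-integral via a Fubini swap'' compresses the one genuinely technical step: after replacing $\bar G_2(t-s)$ by $(c_2+\epsilon)\bar F(t-s)$ one cannot directly trade $dG_1$ for $dF$. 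Carrying out the Fubini (equivalently, integrating by parts) produces a boundary term,
\[
\int_{h}^{t-h}\bar F(t-s)\,dG_1(s)\;\le\;\int_{h}^{t-h}\bar G_1(t-u)\,dF(u)+\bar G_1(h)\,\bar F(t-h),
\]
and one must observe separately that $\bar G_1(h)\bar F(t-h)=o(\bar F(t))$ (true since $h(t)\to\infty$ and $F$ is long-tailed) before applying $\bar G_1\le(c_1+\epsilon)\bar F$ on $[h,t-h]$ and invoking the $F$-case. With those two repairs the argument is complete and yields the stated limit.
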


\begin{lemma}[Lemma~6.2 in \cite{Liu-Min-Zhao:2017}] \label{lemma-2nd-order-X}
    Let $X_1$ and $X_2$ be independent r.v.s with distribution functions $F_1$ and $F_2$, respectively.
Assume that $\overline{F}_1(t)\sim c_1 t^{-d+1}L(t)$ and $\overline{F}_2(t)\sim c_2 t^{-d}L(t)$, where $c_1>0$, $c_2>0$, $d>1$ and $L(t)$ is a slowly varying function at infinity. Then
\begin{eqnarray}\label{lemma-2nd-order}
P\{X_1+X_2>t\}=\overline{F}_1(t)+ (d-1)\mu_{F_2}\cdot t^{-1}\overline{F}_1(t) +\overline{F}_2(t)+o(\overline{F}_2(t))\quad\mbox{ as }t\to\infty,
\end{eqnarray}
where $\mu_{F_2}<\infty$ is the mean value of $X_2$.
\end{lemma}

It follows from (\ref{main}) that
$P\{T_{\tau}>t\} = (1-1/a)c_{\kappa}\psi r_1 t^{-a}+o(t^{-a})$ as $t\to\infty$ where $\psi$ and $c_{\kappa}$ are given in (\ref{psi}) and (\ref{c-K}), respectively. By Lemma \ref{Lemma 3.4new} and Assumption A, we have
\begin{equation}\label{T-beta+T-tau>x}
P\{T_{\beta}+T_{\tau}>t\}
= \left(r_1+\frac {\lambda r_1 } {a\mu (1-\rho)^2}\right)  t^{-a}+o(t^{-a}).
\end{equation}

In terms of Lemma~\ref{lemma-2nd-order-X}, (\ref{T-theta>x}), (\ref{T-theta>x-2}) and (\ref{T-beta+T-tau>x}), together with $E(T_{\beta}+T_{\tau})=\beta_1+\psi=\beta_1+\frac {\rho} {\mu(1-\rho)}$, we have the following result.
\begin{theorem} \label{the:tailT}
The second order asymptotic expansion for the tail probability $P\{T_{\theta}+T_{\beta}+T_{\tau}>t\}$ is given by:
\begin{equation}\label{P{T-theta+beta+tau}>x}
    P\{T_{\theta}+T_{\beta}+T_{\tau}>t\} = \frac {\lambda r_1} {(a-1)(1-\rho)} t^{-a+1}+\Delta_{T}(t),\quad\mbox{ as }t\to\infty,
\end{equation}
where
\begin{eqnarray}%
    \Delta_{T}(t) &=& \left \{ \begin{array}{ll}
\displaystyle \left[c_T+\frac {\lambda r_2} {a(1-\rho)} L_0(t)\right]t^{-a}+o(t^{-a}), & \mbox{for $h=1$}, \\
\displaystyle \frac {\lambda r_2} {(a+h-1)(1-\rho)} t^{-a-h+1}L_0(t) +o(t^{-a-h+1}), & \mbox{for $0<h<1$}, \\
\displaystyle c_T t^{-a}+o(t^{-a}), & \mbox{for $h>1$}, \end{array} \right.
\label{Delta-T-1}\\
c_T&=&\frac {r_1} {1-\rho}+\frac { \lambda r_1(\rho+1/a)} {\mu (1-\rho)^2}+\frac {\lambda^2\beta_2 r_1} {(1-\rho)^2 }.\label{Delta-T-2}
\end{eqnarray}

\end{theorem}

\section{Second order asymptotic expansion of tail probability for $L_{\mu}$}
\label{sec:5}

In this section, we will complete the proof of our main result (Theorem~\ref{the:main}), which is a refinement of the asymptotic equivalence (\ref{Lmu=Linfty}),
by introducing a second term in the asymptotic approximation to $P\{L_{\mu}>j\}$.
Towards this end, we now provide the following results, which will be used in the proof.
\begin{lemma}\label{lemma5-1}
Let $d>0$ be a constant. Then, as $x\to\infty$,
\begin{equation}
\frac {\Gamma(x-d)} {\Gamma(x)} =  x^{-d}+\frac {d(d+1)} {2} x^{-d-1}+O(x^{-d-2}).
\end{equation}
\end{lemma}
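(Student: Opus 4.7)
The lemma is a purely analytical statement about the Gamma function, independent of the queueing machinery in the rest of the paper, so the plan is to derive it directly from Stirling's asymptotic formula through a careful second-order expansion.

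The starting point is Stirling's formula in logarithmic form,
\[
    \log \Gamma(y) = \left(y-\tfrac{1}{2}\right)\log y - y + \tfrac{1}{2}\log(2\pi) + O(y^{-1}),
\]
applied to both $y=x-d$ and $y=x$. Subtracting, the $\tfrac{1}{2}\log(2\pi)$ terms cancel and one obtains
\[
    \log\frac{\Gamma(x-d)}{\Gamma(x)} = \left(x-d-\tfrac{1}{2}\right)\log(x-d) - \left(x-\tfrac{1}{2}\right)\log x + d + O(x^{-1}).
\]
I would then insert the Taylor expansion $\log(x-d) = \log x + \log(1-d/x) = \log x - d/x - d^2/(2x^2) + O(x^{-3})$ and collect terms by orders of $x^{-1}$.

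The bookkeeping is the step to be careful with: the $(x-d-1/2)\log x$ piece combines with $-(x-1/2)\log x$ to produce $-d\log x$, while the expansion of $(x-d-1/2)\log(1-d/x)$ contributes
\[
    -d + \frac{d^2}{x} + \frac{d}{2x} - \frac{d^2}{2x} + O(x^{-2}) = -d + \frac{d(d+1)}{2x} + O(x^{-2}).
\]
Adding this to the preceding display, the two constant $-d$ terms cancel against the $+d$ from the Stirling subtraction, yielding
\[
    \log\frac{\Gamma(x-d)}{\Gamma(x)} = -d\log x + \frac{d(d+1)}{2x} + O(x^{-2}).
\]

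The final step is to exponentiate. Writing $\exp\{d(d+1)/(2x)+O(x^{-2})\} = 1 + d(d+1)/(2x) + O(x^{-2})$ and multiplying by $x^{-d}$ gives exactly
\[
    \frac{\Gamma(x-d)}{\Gamma(x)} = x^{-d} + \frac{d(d+1)}{2}x^{-d-1} + O(x^{-d-2}),
\]
which is the claim. The main obstacle, to the extent there is one, is the algebraic bookkeeping in the Taylor expansion of $(x-d-1/2)\log(1-d/x)$, ensuring that all $O(x^{-1})$ contributions are collected correctly so that the coefficient of $x^{-d-1}$ comes out to be precisely $d(d+1)/2$ and not, say, $d^2/2$; once this is done the rest is routine exponentiation.
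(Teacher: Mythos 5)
Your approach is exactly the paper's: log Stirling, subtract, Taylor-expand $\log(1-d/x)$, collect to order $x^{-1}$, exponentiate. There is, however, one genuine gap in the bookkeeping as written. You start from Stirling's formula with remainder $O(y^{-1})$, so after subtraction you honestly only know
\[
\log\frac{\Gamma(x-d)}{\Gamma(x)} = \Bigl(x-d-\tfrac{1}{2}\Bigr)\log(x-d) - \Bigl(x-\tfrac{1}{2}\Bigr)\log x + d + O(x^{-1}),
\]
and that $O(x^{-1})$ is of the \emph{same order} as the $d(d+1)/(2x)$ term you are trying to isolate, so the final display with error $O(x^{-2})$ does not follow from what precedes it. You cannot silently upgrade $O(x^{-1})$ to $O(x^{-2})$. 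The paper avoids this by using the sharper Stirling form
\[
\log\Gamma(y) = \Bigl(y-\tfrac{1}{2}\Bigr)\log y - y + \tfrac{1}{2}\log(2\pi) + \frac{1}{12y} + O(y^{-3}),
\]
so that upon subtraction the two $1/(12\,\cdot)$ terms give $\tfrac{1}{12(x-d)} - \tfrac{1}{12x} = O(x^{-2})$, and the leftover error is genuinely $O(x^{-2})$. With that one refinement, the rest of your computation (the expansion of $(x-d-\tfrac12)\log(1-d/x) = -d + d(d+1)/(2x) + O(x^{-2})$, the cancellation of the constant $-d$ against $+d$, and the exponentiation) is correct and matches the paper.
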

\begin{proof}
The second order asymptotic expansion of the gamma function is given by
\begin{equation}\label{Gammma/Gamma-1}
\Gamma(x) = \sqrt{2\pi}x^{x-1/2}e^{-x}
\left[1+\frac 1 {12x} + O\left(\frac 1 {x^2}\right)\right]\quad\mbox{ as }x\to\infty.
\end{equation}
Taking the logarithm of both sides, we get
\begin{equation}\label{Gammma/Gamma-2}
\ln\Gamma(x) = \left(x-1/2\right)\ln x -x +\ln \sqrt{2\pi} +\frac 1 {12x} +O\left(\frac 1 {x^2}\right).
\end{equation}
Noting that
\begin{eqnarray}\label{Gammma/Gamma-3}
(x-d-1/2)\ln (1-d/x)&=&(x-d-1/2)\left[-\frac d {x} - \frac {d^2} {2x^2}+O\left(\frac 1 {x^3}\right)\right]\nonumber\\
&=&-d +\frac {d(d+1)} {2x}+O\left(\frac 1 {x^2}\right),\nonumber
\end{eqnarray}
we can write
\begin{equation}\label{Gammma/Gamma-4}
\ln\Gamma(x-d) = (x-d-1/2)\ln x + \frac {d(d+1)} {2x} -x +\ln\sqrt{2\pi}+\frac 1 {12x} +O\left(\frac 1 {x^2}\right).
\end{equation}
By (\ref{Gammma/Gamma-2}) and (\ref{Gammma/Gamma-4}), we have $$\ln\Gamma(x-d)-\ln\Gamma(x)=-d\ln x + \frac {d(d+1)} {2x}+O(x^{-2}).$$ Therefore,
\begin{equation}
\frac {\Gamma(x-d)} {\Gamma(x)} = x^{-d}\exp\left\{\frac {d(d+1)} {2x}+O\left(\frac 1 {x^2}\right)\right\}=
x^{-d}\left [1 +\frac {d(d+1)} {2x}+O\left(\frac 1 {x^2}\right)\right ].
\end{equation}
\end{proof}
\begin{corollary}\label{corollary5-1}
Suppose that $\lambda>0$ and $d>0$. Then, as $j\to\infty$,
\begin{eqnarray}
\int_0^{\infty}\lambda e^{-\lambda t}\frac {(\lambda t)^{j+1}} {(j+1)!}\cdot t^{-d}dt
&=&\lambda^{d} j^{-d}+\frac 1 2 d(d-3) \lambda^{d} j^{-d-1} +O(j^{-d-2}).
\end{eqnarray}
\end{corollary}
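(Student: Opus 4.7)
The plan is to reduce this corollary directly to Lemma~\ref{lemma5-1} by first evaluating the integral in closed form as a ratio of Gamma functions, and then carefully expanding that ratio to second order.

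The first step is the substitution $u=\lambda t$ (so $du=\lambda\,dt$ and $t=u/\lambda$), which transforms the integrand into $e^{-u}u^{j+1}(u/\lambda)^{-d}/(j+1)!$. Pulling out $\lambda^{d}$ and recognizing the remaining $u$-integral as $\int_{0}^{\infty}e^{-u}u^{(j+2-d)-1}du=\Gamma(j+2-d)$, I obtain the exact identity
\begin{equation}
\int_0^{\infty}\lambda e^{-\lambda t}\frac{(\lambda t)^{j+1}}{(j+1)!}t^{-d}\,dt = \lambda^{d}\,\frac{\Gamma(j+2-d)}{\Gamma(j+2)}.\nonumber
\end{equation}
This is the only calculation that uses the specific form of the integrand; everything afterwards is asymptotic bookkeeping.

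The second step is to apply Lemma~\ref{lemma5-1} with $x=j+2$, giving
\begin{equation}
\frac{\Gamma(j+2-d)}{\Gamma(j+2)} = (j+2)^{-d} + \frac{d(d+1)}{2}(j+2)^{-d-1} + O\bigl((j+2)^{-d-2}\bigr).\nonumber
\end{equation}
To compare with the stated expansion in powers of $j^{-1}$, I expand $(j+2)^{-d}=j^{-d}(1+2/j)^{-d}=j^{-d}-2d\,j^{-d-1}+O(j^{-d-2})$ by the binomial series, and note $(j+2)^{-d-1}=j^{-d-1}+O(j^{-d-2})$. Substituting these into the Gamma ratio yields
\begin{equation}
\frac{\Gamma(j+2-d)}{\Gamma(j+2)} = j^{-d} + \left(\frac{d(d+1)}{2}-2d\right)j^{-d-1} + O(j^{-d-2}) = j^{-d} + \frac{d(d-3)}{2}\,j^{-d-1} + O(j^{-d-2}),\nonumber
\end{equation}
and multiplying by $\lambda^{d}$ gives the claimed expansion.

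There is no real obstacle here; this corollary is essentially a one-line consequence of Lemma~\ref{lemma5-1} once the integral is recognized as a Gamma ratio. The only point that requires any care is making sure the two correction terms from the expansions of $(j+2)^{-d}$ and $(j+2)^{-d-1}$ are both tracked at order $j^{-d-1}$ (the $-2d$ from the binomial expansion of $(j+2)^{-d}$ combines with $d(d+1)/2$ from Lemma~\ref{lemma5-1} to yield the coefficient $d(d-3)/2$), since dropping either contribution would give the wrong constant.
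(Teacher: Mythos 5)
Your proof is correct and follows essentially the same route as the paper: evaluate the integral as $\lambda^{d}\,\Gamma(j+2-d)/\Gamma(j+2)$, apply Lemma~\ref{lemma5-1} with $x=j+2$, and then re-expand $(j+2)^{-d}$ and $(j+2)^{-d-1}$ in powers of $j^{-1}$ to combine the $-2d$ and $d(d+1)/2$ contributions into $d(d-3)/2$. The arithmetic checks out at every step.
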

\begin{proof}
Note that
\begin{eqnarray}\label{corollary5-1-proof-a}
\int_0^{\infty}\lambda e^{-\lambda t}\frac {(\lambda t)^{j+1}} {(j+1)!}\cdot t^{-d}dt &=&\lambda^{d}\cdot\frac {\Gamma(j+2-d)} {\Gamma(j+2)}.
\end{eqnarray}
By Lemma \ref{lemma5-1}, 
\begin{eqnarray}
\Gamma(j+2-d)/\Gamma(j+2)&=&(j+2)^{-d}+\frac 1 2 d(d+1) (j+2)^{-d-1} +O(j^{-d-2})\nonumber\\
&=&j^{-d}+\frac 1 2 d(d-3) j^{-d-1} +O(j^{-d-2}),\nonumber
\end{eqnarray}
which is substituted into (\ref{corollary5-1-proof-a}) to complete the proof.
\end{proof}

Suppose that $N_t$ is a Poisson process with rate $\lambda>0$, and $T>0$ is an
independent r.v. with tail probability $\bar{F}(t)=P\{T>t\}$. Then,
\begin{equation}
P\{N_T>j\} = \int_0^{\infty}P\{N_t\ge j+1\}dF(t)
=\int_0^{\infty}\lambda e^{-\lambda t}\frac {(\lambda t)^{j+1}} {(j+1)!}\bar{F}(t)dt.\label{P(N_T>j)}
\end{equation}
\begin{lemma}[Proposition~3.1 in  \cite{Asmussen-Klupperlberg-Sigman:1999}, or Theorem~3.1 in \cite{Foss-Korshunov:2000}]
\label{lemma-Asmussen-poisson}
If $\bar{F}(t)=P\{T>t\}$ is heavier than $e^{-\sqrt{t}}$ as $t\to\infty$, then $P(N_T>j)\sim P\{T>j/\lambda\}$ as $j\to\infty$.
\end{lemma}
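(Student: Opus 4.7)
My plan is to work from the integral representation in (\ref{P(N_T>j)}), equivalently (after integration by parts)
$$P\{N_T>j\}=\int_0^\infty\bar F(t)\,g_j(t)\,dt,\qquad g_j(t)=\lambda e^{-\lambda t}\frac{(\lambda t)^j}{j!},$$
the $\mathrm{Gamma}(j+1,\lambda)$ density. Since $g_j$ concentrates at its mean $m_j=(j+1)/\lambda\sim j/\lambda$ with standard deviation $\sqrt{j+1}/\lambda$, and since $N_T/\lambda\approx T$ by the law of large numbers applied to the Poisson clock, one expects $P\{N_T>j\}\sim\bar F(j/\lambda)$.

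The argument I would carry out is Laplace's method on the integrand written as $(\lambda/j!)\exp f(t)$ with
$$f(t)=j\log(\lambda t)-\lambda t-\phi(t),\qquad\phi(t)=-\log\bar F(t).$$
The saddle $t^*$ solves $j/t^*=\lambda+\phi'(t^*)$, so $t^*=j/\lambda-(j/\lambda^2)\phi'(j/\lambda)+o(\cdot)$. Under the hypothesis that $\bar F$ is heavier than $e^{-\sqrt t}$, $\phi(t)=o(\sqrt t)$, and generically $\phi'(t)=o(1/\sqrt t)$, so the saddle shift is $o(\sqrt j/\lambda)$, strictly inside the Gaussian window of $g_j$. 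A Taylor expansion at $t^*$ gives $f(t^*)=f(j/\lambda)+o(1)=-j+j\log j-\phi(j/\lambda)+o(1)$, while $f''(t^*)\sim-\lambda^2/j$ is furnished by the Gamma part alone since $\phi''(t)=o(1/t)$. Gaussian integration around $t^*$, combined with Stirling's $j!\sim j^je^{-j}\sqrt{2\pi j}$, then yields
$$\int_0^\infty\bar F(t)g_j(t)\,dt=\bar F(j/\lambda)(1+o(1)),$$
which is exactly the claim.

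The main obstacle is that $\phi(t)=o(\sqrt t)$ does not automatically furnish pointwise control on $\phi'$ for a general (possibly non-smooth) monotone $\bar F$. I would handle this by bracketing $\bar F$ between two smooth heavy-tailed envelopes $\bar F_\pm$ still satisfying the $e^{-\sqrt t}$ condition, applying Laplace's method to each, and passing to the limit by monotonicity. As an alternative, a window-plus-Cram\'er decomposition --- split the integral over $[m_j-\eta_j,m_j+\eta_j]$ with $\eta_j=\omega_j\sqrt j/\lambda$ and $\omega_j\to\infty$, sandwich $\bar F$ on the window via monotonicity, and bound the complement by the Cram\'er tail $e^{-c\omega_j^2}$ --- is sharp enough in the sub-borderline regime $\phi(t)=o(t^{1/3})$ but not quite up to $\phi(t)=o(\sqrt t)$, confirming that the Laplace refinement is really needed near the $e^{-\sqrt t}$ threshold. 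Either route produces $P\{N_T>j\}\sim\bar F(j/\lambda)$.
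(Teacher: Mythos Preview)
The paper does not prove this lemma; it is quoted as Proposition~3.1 of~\cite{Asmussen-Klupperlberg-Sigman:1999} and used as a black box, so there is no argument in the paper to compare yours against.

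Your saddle-point computation is sound when $\bar F$ is smooth with $\phi'(t)=o(t^{-1/2})$ and $\phi''(t)=o(t^{-1})$, and your diagnosis that the bare window-plus-Cram\'er argument stalls near $\phi(t)=o(t^{1/3})$ is correct. But the smoothing step you propose to bridge the gap does not work in general. Take $\bar F(t)=\exp\bigl(-\lfloor t^{1/3}\rfloor\bigr)$, which is heavier than $e^{-\sqrt t}$. Any continuous minorant $\phi_+\le\phi$ of $\phi=-\log\bar F$ must obey $\phi_+(n^3)\le\phi((n^3)^-)=n-1$, hence $\phi(n^3)-\phi_+(n^3)\ge 1$ for every $n$ and $\bar F_+\not\sim\bar F$; the obstruction for a smooth majorant $\phi_-\ge\phi$ is symmetric. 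Worse, for this very $\bar F$ the conclusion of the lemma itself fails: for integers $j$ with $j/\lambda$ just below $n^3$ the Gamma$(j+1,\lambda)$ mass is split essentially in half by the jump at $n^3$, and one gets $P\{N_T>j\}/\bar F(j/\lambda)\to\tfrac12(1+e^{-1})\neq 1$ along that subsequence.

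So the hypothesis ``heavier than $e^{-\sqrt t}$'', read literally, is not sufficient; one needs some additional regularity, for instance long-tailedness $\bar F(t+x)\sim\bar F(t)$ (which the example above violates, and which the formulation in~\cite{Asmussen-Klupperlberg-Sigman:1999} in effect carries). In the paper's only application, Corollary~\ref{corollary5-2} with $\bar F(t)=t^{-d}L(t)$, such regularity is present and your Laplace argument---or even the simpler window argument---goes through without difficulty. A proof at the full generality of the lemma as worded, however, would require either a sharper hypothesis or an idea beyond the two routes you outline.
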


Since $t^{-d}L(t)$ is heavier than $e^{-\sqrt{t}}$ as $t\to\infty$, we immediately have the following
corollary.
\begin{corollary}\label{corollary5-2}
Suppose that $\lambda>0$ and $d>0$. Let $L(t)$ be a slowly varying function at $\infty$. Then,
\begin{equation}
\int_0^{\infty}\lambda e^{-\lambda t}\frac {(\lambda t)^{j+1}} {(j+1)!}\cdot t^{-d}L(t)dt
\sim \lambda^{d} j^{-d}L(j)\qquad \mbox{ as }j\to\infty.
\end{equation}
\end{corollary}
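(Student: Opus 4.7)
The plan is to recognize the integral as essentially the probability $P\{N_T>j\}$ for a suitable random variable $T$, and then invoke Lemma~\ref{lemma-Asmussen-poisson}. Formula~(\ref{P(N_T>j)}) tells us that if $T$ has tail $\bar F(t)=P\{T>t\}$ independent of the Poisson process $N_t$, then $P\{N_T>j\}=\int_0^\infty \lambda e^{-\lambda t}(\lambda t)^{j+1}/(j+1)!\cdot\bar F(t)\,dt$. The integrand in the corollary has this exact shape with $\bar F(t)$ replaced by $t^{-d}L(t)$, so the task reduces to producing an actual tail function that agrees with $t^{-d}L(t)$ near infinity and showing the low-$t$ discrepancy is negligible.

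Concretely, I would pick $t_0\ge 1$ large enough that $t^{-d}L(t)\le 1$ for all $t\ge t_0$ (possible because slow variation gives $t^{-d}L(t)\to 0$), and define a distribution $F$ on $[0,\infty)$ by $\bar F(t)=t^{-d}L(t)$ for $t\ge t_0$ and $\bar F(t)=1$ for $0\le t<t_0$. Then
\begin{equation*}
\int_0^\infty \lambda e^{-\lambda t}\frac{(\lambda t)^{j+1}}{(j+1)!}\,t^{-d}L(t)\,dt
=P\{N_T>j\}+\int_0^{t_0}\lambda e^{-\lambda t}\frac{(\lambda t)^{j+1}}{(j+1)!}\bigl[t^{-d}L(t)-1\bigr]\,dt.
\end{equation*}
The remainder integral is controlled by the Poisson mass on $[0,t_0]$, namely $P\{N_{t_0}\ge j+1\}$, which decays faster than any polynomial in $j$; combined with the fact that $\int_0^{t_0} t^{-d}L(t)\,dt$ contributes only a prefactor (tamed by the $(\lambda t)^{j+1}$ weight near zero, which kills any mild singularity of $L$ at the origin), this remainder is $o(j^{-d}L(j))$.

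Finally, since $\bar F$ is regularly varying at infinity with index $-d<0$, it is certainly heavier than $e^{-\sqrt t}$, so Lemma~\ref{lemma-Asmussen-poisson} applies and gives $P\{N_T>j\}\sim P\{T>j/\lambda\}=(j/\lambda)^{-d}L(j/\lambda)=\lambda^d j^{-d}L(j/\lambda)$ as $j\to\infty$. Slow variation of $L$ then replaces $L(j/\lambda)$ by $L(j)$, and adding back the negligible remainder yields $\lambda^d j^{-d}L(j)$, as claimed.

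The only delicate point, rather than a serious obstacle, is justifying that the $[0,t_0]$ portion of the integral is swamped by $j^{-d}L(j)$: since $L$ is a priori only assumed to be slowly varying at infinity, its behavior near $0$ must be handled carefully via the factorial decay of $(\lambda t_0)^{j+1}/(j+1)!$, which beats any fixed polynomial rate; standard Poisson tail bounds make this rigorous.
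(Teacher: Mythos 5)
Your proof is correct and uses the same route as the paper: recognize the integral as $P\{N_T>j\}$ and apply the Asmussen--Kl\"uppelberg--Sigman result (Lemma~\ref{lemma-Asmussen-poisson}) together with slow variation to replace $L(j/\lambda)$ by $L(j)$. The paper compresses this into a single sentence, implicitly treating $t^{-d}L(t)$ as if it were already a valid tail; you explicitly cut off at some $t_0$ to manufacture a genuine distribution and then bound the $[0,t_0]$ discrepancy via the super-exponential decay of $(\lambda t_0)^{j+1}/(j+1)!$, which is the right (and omitted) technical point.
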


By Lemma~\ref{lem:interpretations}, $L_{\mu}=N_{T_{\theta}+T_{\beta}+T_{\tau}}$.
Setting $\bar{F}(t)=P\{T_{\theta}+T_{\beta}+T_{\tau}>t\}$ in (\ref{P(N_T>j)}), we know
\begin{equation}
P\{L_{\mu}>j\}=P\{N_{T_{\theta}+T_{\beta}+T_{\tau}}>j\}
=\int_0^{\infty}\lambda e^{-\lambda t}\frac {(\lambda t)^{j+1}} {(j+1)!}\cdot P\{T_{\theta}+T_{\beta}+T_{\tau}>t\}dt.\label{P{N-theta+beta+tau}>j}
\end{equation}
Recall that the second order asymptotic expansion of $P\{T_{\theta}+T_{\beta}+T_{\tau}>t\}$ as $t\to\infty$. Substituting (\ref{P{T-theta+beta+tau}>x}) into (\ref{P{N-theta+beta+tau}>j}),
\begin{equation}
P\{L_{\mu}>j\}= \frac {\lambda r_1} {(a-1)(1-\rho)}\int_0^{\infty}\lambda e^{-\lambda t}\frac {(\lambda t)^{j+1}} {(j+1)!}\cdot t^{-a+1}dt
+\int_0^{\infty}\lambda e^{-\lambda t}\frac {(\lambda t)^{j+1}} {(j+1)!}\cdot \Delta_{T}(t)dt. \label{P{N-theta+beta+tau}>j-2}
\end{equation}
Substituting $\Delta_{T}(t)$, given in (\ref{Delta-T-1}) and (\ref{Delta-T-2}), into (\ref{P{N-theta+beta+tau}>j-2}) and
applying Corollary~\ref{corollary5-1} and Corollary~\ref{corollary5-2}, we complete
the proof of Theorem~\ref{the:main}.

\section{Concluding remarks and numerical results}
\label{sec:concluding}

In this paper, we considered the $M/G/1$ retrial queueing system, for which we obtained the second order asymptotic expansion for the tail probability of the total number $L_\mu$ of customers in the system. This result is a refinement of the first order approximation (the equivalence theorem (\ref{Lmu=Linfty}) for the retrial queue systems), and is also a refinement of a recent asymptotic result in \cite{Liu-Min-Zhao:2017}, which is the first order approximation for the difference between the tail probabilities of $L_\mu$ and $L_\infty$ (the total number of customers in the corresponding standard $M/G/1$ queueing system).
However, the first order asymptotic expansion for $P\{L_\mu > j\} - P\{L_\infty > j\}$ cannot directly act as the second term in the second order asymptotic expansion for $P\{ L_{\mu}>j \}$. This can be clarified as follows.

It was proved in \cite{Liu-Min-Zhao:2017} (Theorem~6.1) that, under the assumption $P\{T_{\beta}>t\} \sim t^{-a}L(t)$ as $t\to\infty$ where $a>1$,
\begin{description}
\item[(i)] $P\{L_{\mu}>j\}\sim P\{L_{\infty}>j\} \sim c_1 j^{-a+1}L(j)$;
\item[(ii)] $P\{L_{\mu}>j\}-P\{L_{\infty}>j\}\sim c_2 j^{-a}L(j)$,
\end{description}
where $c_1$ and $c_2$ are constants.
(i) and (ii) imply that
\begin{equation}
    P\{L_{\infty}>j\}= c_1\cdot j^{-a+1}L(j)+o(j^{-a+1}L(j)) \label{f2}
\end{equation}
and
\begin{equation}
    P\{L_{\mu}>j\}=P\{L_{\infty}>j\}+c_2\cdot j^{-a}L(j)+o(j^{-a}L(j)). \label{f1}
\end{equation}
Substituting (\ref{f2}) into  (\ref{f1}), we have
\begin{equation}
P\{L_{\mu}>j\}=c_1 j^{-a+1}L(j)+o(j^{-a+1}L(j))+ c_2 j^{-a}L(j)+o(j^{-a}L(j)). \label{f3}
\end{equation}
It is clear that compared to all other terms in (\ref{f3}), the last term $o(j^{-a}L(j))$ is a higher order infinitesimal function as $j \to \infty$. However, to specify the second term in the second order asymptotic expansion, a comparison of the orders between the second and the third terms in (\ref{f3}) is needed, which has not been addressed in Theorem~6.1 of \cite{Liu-Min-Zhao:2017}. This requires further specification on the second term of the asymptotic tail probability of the service time. Such a comparison has been made in this paper under Assumption~A, in which the
slowly varying function $L(\cdot)$ is further specified, allowing us to obtain the second term $o(j^{-a+1}L(j))$ in the second order asymptotic expansion of $L_\infty$.

Finally, it is interesting to see the improvement in the approximation when the second term in the asymptotic expansion is added to the first term approximation.
Let us set $r_1=r_2=1/2$ and $L_0(\cdot)\equiv 1$ in Assumption A. Thus, $\bar{F}_{\beta}(t)=t^{-a}\left(\frac 1 2+\frac 1 2 t^{-h}\right)$ with $t\ge 1$, where $a>2$ and $h>0$. The first two moments of service time are:
$\beta_1=\int_0^{\infty}\bar{F}_{\beta}(t)dt=1+\frac 1 2\left(\frac 1 {a-1}+\frac 1 {a+h-1}\right)$ and
$\beta_2=\int_0^{\infty}2t\bar{F}_{\beta}(t)dt=1+ \frac 1 {a-2}+\frac 1 {a+h-2}$. We use the notations Apprx1 and Apprx2 to represent the first and second order asymptotic approximations to $P\{L_{\mu}>j\}$, respectively. By Theorem~\ref{the:main},
$\mbox{Apprx1}= \frac {\lambda^a r_1} {(a-1)(1-\rho)} j^{-a+1}$ and $\mbox{Apprx2}=\mbox{Apprx1}+\Delta_L^{app}(j)$.
where $\Delta_L^{app}(j)$ is the approximated value of $\Delta_L(j)$ by neglecting the higher order infinitesimal in (\ref{the:main-2}). We use the notation Imprv to represent the relative improvement made by adopting the seond order asymptotic approximation, that is, $\mbox{Imprv}=\frac {\Delta_L(j)} {\mbox{Apprx2}}\times 100\%$. To be specific, we let $\mu=0.2$,
$a=2.5$,  $h= 2$,  and $\lambda=\rho/\beta_1$ with $\rho\in\{0.4, 0.6, 0.8\}$.
In the following table, we report the first and second order asymptotic approximations to $P\{L_{\mu}>j\}$, from which we can see a significant improvement.  Similar improvements can be seen for other parameter values.

\begin{table}[!hbp]
\begin{center}
\begin{tabular}{|c|c|r|r|r|}
\hline
\multirow{2}{*}{$\rho$} &\multirow{2}{*}{$j$}
&\multicolumn{3}{|c|}{$P\{L_{\mu}>j\}$}\\
\cline{3-5}
  &  &Apprx1 &Apprx1 &Imprv(\%) \\
\hline
\multirow{5}{*}{0.4}
 &10 	&0.00067146   &0.00092049       &37.088  \\
 &20 	&0.00023740   &0.00028142       &18.544  \\
 &30 	&0.00012922   &0.00014520       &12.363  \\
 &50 	&6.0057e-05   &6.4512e-05       &7.4176  \\
 &90 	&2.4869e-05   &2.5894e-05       &4.1209  \\
\hline
\multirow{5}{*}{0.6}
 &20 	&0.00098129   &0.00147690       &50.512  \\
 &40 	&0.00034694   &0.00043456       &25.256  \\
 &60 	&0.00018885   &0.00022065       &16.837  \\
 &80 	&0.00012266   &0.00013815       &12.628  \\
 &100 	&8.7769e-05   &9.6636e-05       &10.102  \\
\hline
\multirow{5}{*}{0.8}
 &20    &0.00402880  &0.01052500       &161.26  \\
 &60    &0.00077534  &0.00119210       &53.752  \\
 &100   &0.00036034  &0.00047656       &32.251  \\
 &160   &0.00017805  &0.00021394       &20.157  \\
 &200   &0.00012740  &0.00014795       &16.126  \\
\hline
\end{tabular}
\caption{The approximations to $P\{L_{\mu}>j\}$ for $a=2.5$ and $h=2$.}
\end{center}
\end{table}

\section*{Acknowledgments}
The authors would like to thank three anonymous reviewers and the handling editor for their valuable comments and suggestions, which improved the quality of this paper significantly. This work was supported in part by the National Natural Science Foundation of China (Grant No. 71571002),
the Natural Science Foundation of the Anhui Higher Education Institutions of China (No. KJ2017A340),
the Research Project of Anhui Jianzhu University, and a Discovery Grant from the Natural Sciences and Engineering Research Council of Canada (NSERC).

\end{document}